\newcommand{\bm}[1]{{\mbox{\boldmath $#1$}}}
\newtheorem{theorem}{Theorem}[section]
\newtheorem{proposition}[theorem]{Proposition}
\newtheorem{lemma}[theorem]{Lemma}
\newtheorem{corollary}[theorem]{Corollary}
\numberwithin{equation}{section}
\theoremstyle{remark}
\newtheorem{rmk}[theorem]{Remark}
\theoremstyle{definition}
\newtheorem{definition}[theorem]{Definition}
\newtheorem{example}[theorem]{Example}
\title{On the Number of Fixed-Length Semiorders}
\author{Yangzhou Hu\footnote{The author's research was part of an undergraduate research project at M.I.T. under the supervision of Richard Stanley.} \\Department of Mathematics\\ Massachusetts
  Institute of Technology\\yangzhou@mit.edu} 
\begin{document}

\maketitle

\begin{abstract}
A \emph{semiorder} is a partially ordered set $P$ with two certain
forbidden induced subposets.  This paper establishes a bijection between $n$-element semiorders of 
length $H$ and $(n+1)$-node ordered trees of height $H+1$. This
bijection preserves not only the number of elements, but also much
additional structure. Based on this correspondence, we calculate the
generating functions and explicit formulas for the numbers of labeled
and unlabeled $n$-element semiorders of length $H$.  We also prove
several concise recurrence relations and provide combinatorial proofs
for special cases of the explicit formulas.
\end{abstract}

\section{Introduction and Main Theorem}\label{ch1}
We will use partially ordered set (poset) notation and terminology
from \cite[Ch.~3]{Stanley}.
A \emph{semiorder}  is a poset without the
following induced subposets:
\begin{itemize}
\item $(\bm{2}+\bm{2})$: four distinct elements $x, y, z, 
  w$, such that $x>y, z>w$, and other pairs are incomparable.     
\item$(\bm{3}+\bm{1})$: four distinct elements $x, y, z,  
  w$, such that $x>y>z$, and other pairs are incomparable.    
\end{itemize}
\begin{center}
$ \xy 0;/r.30pc/:
(0,0)*{\bullet}="a";
(0,5)*{\bullet}="b";
(5,0)*{\bullet}="c";
(5,5)*{\bullet}="d";
(-1.5,1)*{y};
(-1.5,6)*{x};
(6.5,1)*{w};
(6.5,6)*{z};
(3,-3)*{\text{$({\bm{2}+\bm{2}})$-structure}};
(40,0)*{\bullet}="a2";
(40,5)*{\bullet}="b2";
(40,10)*{\bullet}="c2";
(45,10)*{\bullet}="d2";
(38.5,1)*{z};
(38.5,6)*{y};
(38.5,11)*{x};
(43.5,11)*{w};
(43,-3)*{\text{$({\bm{3}+\bm{1}})$-structure}};
(75,10)*{\bullet}="b3";
(85,10)*{\bullet}="c3";
(95,10)*{\leftarrow\text{first level}};
(75,5)*{\bullet}="d3";
(85,5)*{\bullet}="e3";
(95,5)*{\bullet}="f3";
(106,5)*{\leftarrow\text{second level}};
(87, 0)*{\text{(a) A semiorder with}};
(87, -4)*{\text{5 elements and length 1}};
"b3";"d3"**\dir{-};
"b3";"e3"**\dir{-};
"b3";"f3"**\dir{-};
"c3";"e3"**\dir{-};
"c3";"f3"**\dir{-};
 "a"; "b"**\dir{-};
 "c"; "d"**\dir{-};
 "a2"; "b2"**\dir{-};
 "b2"; "c2"**\dir{-};
\endxy$
\end{center}

In other words, semiorders are $(\bm{2}+\bm{2})$-free and
$(\bm{3}+\bm{1})$-free posets.  Every semiorder can also be regarded
as a partial ordering $P$ of a subset of $\mathbb{R}$ defined by $x<y$
in $P$ if $x<y-1$ in $\mathbb{R}$.  The \emph{length} $H$ of a
semiorder is the length of a longest chain.  Every semiorder $R$ with
$n$ elements, up to isomorphism, can be uniquely represented as an
integer vector $\rho(R)=(r_1, r_2, \dots, r_n)$, where $r_i$
represents the number of elements smaller than the $i^{\mathrm{th}}$ element,
and $r_1\geq r_2 \geq \dots \geq r_{n}\geq 0$, $r_i \leq n-i$, for all
$1\leq i \leq n$.  For instance, the above graph (a) presents a
semiorder $R$ with $5$ elements, length $1$, and vector
$\rho(R)=(3,2,0,0,0)$.  For further basic information on semiorders,
see \cite{linearextension}.

There is much interest in enumerating the number of posets with
certain properties.  For example, Bousquet-M\'{e}lou et al. enumerated the
number of $(\bm{2}+\bm{2})$-free posets \cite{2+2free}.  It is a
classical result of Wine and Freund \cite{Wine} that the number of
nonisomorphic $n$-element semiorders is the Catalan number
$C_n=\frac{1}{n+1} {2n\choose n}$, while Chandon, Lemaire, and Pouget
\cite{Chandon} showed (in an equivalent form) that if $f(n)$ is the
number of $n$-element \emph{labeled} semiorders (or semiorders on an
$n$-element set), then $\sum_{n \geq 0} f(n)\frac{x^n}{n!}=\sum_{n\geq
  0} C_n(1-e^{-x})^n$.  For a general principle implying this result,
see Lemma~\ref{Yan}.  In this paper, we deal with semiorders of length
at most $H$.  That is, we enumerate the number of posets which are
$(\bm{2}+\bm{2})$-free, $(\bm{3}+\bm{1})$-free, and of length at most
$H$.  We carry out the enumeration by establishing a bijection between
semiorders and ordered trees of a fixed height. 

An \emph{ordered tree} is a rooted tree that has been embedded in the
plane so that the relative order of subtrees at each node is part of
its structure.  The \emph{height} $H$ of an ordered tree is the number
of edges in a chain of maximum length.  The following graph (b) shows
an ordered tree with 6 nodes and height 2.  

$$ \xy 0;/r.25pc/: 
(80,-10)*{\bullet}="a2";
(85,-5)*{\bullet}="b2";
(90,0)*{\bullet}="c2";
(95,-5)*{\bullet}="d2";
(100,-10)*{\bullet}="e2";
(105,-5)*{\bullet}="f2";
(110,0)*{\bullet}="g2";
(115,-5)*{\bullet}="h2";
(120,0)*{\bullet}="i2";
(125,-5)*{\bullet}="j2";
(130,-10)*{\bullet}="k2";
(105, -15)*{\text{(c) A Dyck path with}};
(105, -19)*{\text{semilength 5 and height 2}};
"a2";"b2"**\dir{-};
"b2";"c2"**\dir{-};
"c2";"d2"**\dir{-};
"d2";"e2"**\dir{-};
"e2";"f2"**\dir{-};
"f2";"g2"**\dir{-};
"g2";"h2"**\dir{-};
"h2";"i2"**\dir{-};
"i2";"j2"**\dir{-};
"j2";"k2"**\dir{-};
(40,0)*{\bullet}="a3";
(50,0)*{\leftarrow\text{depth 0}};
(35,-5)*{\bullet}="b3";
(32,-5)*{A};
(45,-5)*{\bullet}="c3";
(55,-5)*{\leftarrow\text{depth 1}};
(35,-10)*{\bullet}="d3";
(32,-10)*{B};
(40,-10)*{\bullet}="e3";
(50,-10)*{\bullet}="f3";
(60,-10)*{\leftarrow\text{depth 2}};
(40, -15)*{\text{(b) An ordered tree with}};
(40, -19)*{\text{6 nodes and height 2}};
"a3";"b3"**\dir{-};
"a3";"c3"**\dir{-};
"b3";"d3"**\dir{-};
"c3";"e3"**\dir{-};
"c3";"f3"**\dir{-};
\endxy$$

A \emph{Dyck path} of semilength $n$ is a lattice path in the
Euclidean plane from $(0,0)$ to $(2n,0)$ whose steps are either
$(1,1)$ or $(1,-1)$ and the path never goes below the $x$-axis.  The
height $H$ of a Dyck path is the maximal $y$-coordinate among all
points on the path.  The above graph (c) shows a Dyck path with
semilength 5 and height 2.

It is well-known that there is a one-to-one correspondence between (i)
ordered trees with $n+1$ nodes and height $H$ and (ii) Dyck paths
with semilength $n$ and height $H$.  This paper establishes a
bijection between $n$-element semiorders of length $H$ and
$(n+1)$-node ordered trees of height $H+1$. Thus these semiorders
simultaneously correspond to Dyck paths of semilength $n$ and height
$H+1$.

\begin{theorem} [Main Theorem] \label{main} 
For $n\geq 1$ and $H\geq 0$, the number of nonisomorphic $n$-element
unlabeled semiorders of length $H$ is equal to the number of
$(n+1)$-node ordered trees of height $H+1$, which is also equal to the
number of Dyck paths with semilength n and height $H+1$. 
\end{theorem}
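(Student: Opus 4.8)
The plan is to construct an explicit bijection between $n$-element unlabeled semiorders of length $H$ and $(n+1)$-node ordered trees of height $H+1$, since the equality with Dyck paths is already noted in the excerpt as the well-known ordered-tree/Dyck-path correspondence (which carries height $H+1$ to semilength-$n$ height-$H+1$ paths). The natural starting point is the canonical vector representation $\rho(R)=(r_1,\dots,r_n)$ with $r_1\ge\cdots\ge r_n\ge 0$ and $r_i\le n-i$; this already encodes a semiorder up to isomorphism, so the whole problem is combinatorial rather than order-theoretic. I would first reinterpret this vector as a lattice-path object: the condition $r_i\le n-i$ together with weak monotonicity is exactly the data of a Dyck-type path, so I expect the map to factor as $\text{semiorder}\to\text{vector}\to\text{Dyck path}\to\text{ordered tree}$.

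First I would make precise how $\rho(R)$ determines the path. Reading the constraints $0\le r_n\le r_{n-1}\le\cdots\le r_1$ with $r_i\le n-i$, I would set up a path whose $i$-th up/down steps are governed by the differences and by the gap $n-i-r_i$; the key is to choose the encoding so that the \emph{length} statistic $H$ of the semiorder corresponds to the \emph{height} statistic of the path. Since $H$ is the length of a longest chain and $r_i$ counts elements below the $i$-th element, the maximal chain length should read off as the maximal value of $r_i$ over the "top" elements, which I expect to translate into the maximal $y$-coordinate of the associated Dyck path. The shift from $H$ to $H+1$ in the tree height (versus $H$ in the semiorder length) is the combinatorial cost of the extra node (the $(n+1)$-st node / root), and I would track this offset carefully, as it is easy to be off by one.

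Next I would verify the map is a bijection. Rather than inverting path-to-semiorder directly, I would argue that the composite lands in the set of ordered trees of the correct height by (i) checking the image consists exactly of valid Dyck paths of semilength $n$, (ii) invoking the stated ordered-tree/Dyck-path correspondence to pass to $(n+1)$-node trees, and (iii) confirming that height $H+1$ of the tree matches length $H$ of the semiorder under the height-tracking from the previous step. Injectivity follows because $\rho$ is a complete isomorphism invariant and the vector-to-path encoding is reversible; surjectivity follows because every Dyck path of semilength $n$ decodes to a valid weakly decreasing vector satisfying $r_i\le n-i$, hence to a genuine semiorder.

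The main obstacle will be pinning down the exact statistic correspondence so that "length $H$" on the semiorder side becomes precisely "height $H+1$" on the tree side, including the $+1$ offset; this requires identifying which feature of the path (e.g.\ its maximal height or the depth of its deepest leaf after the tree transformation) measures the longest chain in $P$. I would handle this by analyzing how a maximal chain $x_{i_0}<x_{i_1}<\cdots<x_{i_H}$ in the semiorder forces a strictly increasing sequence of $r$-values (since $x<y$ in $P$ forces strictly more elements below $y$ than below $x$ by at least the right amount), and then showing this increasing sequence is exactly the data producing a height-$H$ excursion in the Dyck path, which becomes depth $H+1$ after prepending the root. Once the statistic bookkeeping is correct, the remaining verifications are routine checks that the constraints on $\rho(R)$ match the defining inequalities of Dyck paths.
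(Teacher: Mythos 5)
Your plan defers the entire difficulty to what you call ``routine bookkeeping,'' and that is precisely where it breaks: the claim that $\rho(R)=(r_1,\dots,r_n)$, reinterpreted via its defining inequalities as a Dyck path, carries the length statistic to the height statistic is false for the natural encodings, and no construction with this property is given. The inequalities $r_1\ge\cdots\ge r_n\ge 0$, $r_i\le n-i$ do match the defining inequalities of Dyck paths, but that observation only reproves the Wine--Freund Catalan count; it says nothing about length versus height. Concretely, under the standard ballot-sequence encoding (the number of down-steps before the $i$-th up-step equals $r_{n+1-i}$), the $n$-element chain maps to the height-$1$ sawtooth and the antichain to the height-$n$ pyramid, so length and height are roughly \emph{anti}-correlated; and no symmetry of Dyck paths can repair this, since the two $4$-element semiorders $(1,1,1,0)$ and $(2,2,0,0)$, which both have length $1$ and hence must both land at height $2$, are sent to paths of heights $3$ and $2$ respectively. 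Your guiding heuristic is also false: the length is not read off from the maximal entry of $\rho(R)$, e.g.\ $(2,1,0,0)$ and $(2,2,1,0)$ both have $r_1=2$ but have lengths $1$ and $2$. Since the length of a semiorder is a genuinely nonlocal function of $\rho(R)$ (and the height distribution of Dyck paths, due to de Bruijn--Knuth--Rice, is itself complicated), matching these two statistics \emph{is} the theorem, not an afterthought.

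The missing ingredient is the structural analysis of semiorders by levels, which is what the paper uses and what a correct bijection has to see. Propositions \ref{marker} and \ref{2} show that in a semiorder every element on level $i$ lies above every element on level $j$ whenever $j-i\ge 2$, so a length-$H$ semiorder is exactly determined by its level sizes $x_1,\dots,x_{H+1}$ together with a staircase-shaped incidence pattern between consecutive levels. The paper then either (a) proves that $t(n,h,k)$ and $f(n,h,k)$ --- trees counted by nodes at maximal depth, semiorders counted by elements on the last level --- satisfy the same recurrence with coefficient $\binom{m+k-1}{m-1}$ (Lemmas \ref{lemma2.6} and \ref{lemma2.7}), or (b) builds the bijection level-by-level, matching depth-$i$ nodes with level-$i$ elements and encoding the connections by the suffix sums $u^i_j$. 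To fix your proof you would have to abandon the idea of reading $\rho(R)$ directly as a path and instead extract the level data from the semiorder first; the vector, read as a raw ballot sequence, simply does not see where the long chains are.
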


Section \ref{sec2} gives a recurrence proof and a bijective proof
for Theorem \ref{main}.  Section~\ref{sec3} calculates the
generating functions and explicit formulas for the number of unlabeled
as well as labeled semiorders with fixed lengths.  Section~\ref{sec5}
proves some concise recurrence relations, and Section~\ref{sec6} shows
explicit formulas for the number of semiorders of
certain lengths $H$, and provides simple bijective proofs for these
formulas.   

\section{Proof of Main Theorem ~\ref{main}}\label{sec2}
Before proving Theorem~\ref{main}, we first define some terminology
that is used later in the proof. 

\begin{definition}\label{treelevel}
A node $A$ in an ordered tree has \emph{depth} $i$ if the distance
from node $A$ to the root is $i$. In particular, the depth of the root is
$0$.  
\end{definition}

In this paper we regard the root as the uppermost node, and all other
nodes are below the root.  We say node $B$ is attached to node $A$ if
node $B$ has depth $i+1$ and node $A$ has depth $i$, and these two
nodes are adjacent.  Refer to graph (b) as an example. 

\begin{definition}\label{2.3}
An element $a$ of a semiorder is on the $i^{\mathrm{th}}$
$\emph{level}$ ($i \geq 1$) if $i$ is the largest integer for which
there exist $i-1$ elements $a_1, a_2, \dots, a_{i-1}$ satisfying $a_1
> a_2 >\dots > a_{i-1} >a$.  Refer to graph (a) as an example.
\end{definition}

\begin{proposition}\label{marker}
For a length $H$ semiorder, and for $1\leq i \leq H$, there is at
least one element on the $i^{\mathrm{th}}$ level that is larger than all
elements on the $(i+1)^{\mathrm{th}}$ level. 
\end{proposition}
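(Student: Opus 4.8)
The plan is to prove Proposition~\ref{marker} by contradiction, exploiting the two forbidden subposets that define a semiorder. Suppose, for some $i$ with $1 \leq i \leq H$, that no element on the $i^{\mathrm{th}}$ level dominates all elements on the $(i+1)^{\mathrm{th}}$ level. The key observation is that the $(i+1)^{\mathrm{th}}$ level is nonempty: since the semiorder has length $H \geq i+1$... wait, actually $i$ ranges only up to $H$, so the $(i+1)$th level could be empty when $i = H$. Let me reconsider the structure.

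Let me think about the logical core. I would first argue that the $(i+1)^{\mathrm{th}}$ level is nonempty whenever $i \leq H$, because a maximal chain of length $H$ must pass through each level $1, 2, \dots, H+1$, so levels up through $H+1$ all contain elements; hence for $i \leq H$ the $(i+1)^{\mathrm{th}}$ level has at least one element. Next I would set up the contradiction hypothesis carefully: assume every element on level $i$ fails to dominate the whole $(i+1)^{\mathrm{th}}$ level, meaning for each level-$i$ element $a$ there is a level-$(i+1)$ element not below $a$.

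The heart of the argument is to produce one of the forbidden configurations $(\bm{2}+\bm{2})$ or $(\bm{3}+\bm{1})$. Here I would use Definition~\ref{2.3}: if $b$ lies on level $i+1$, then there is a chain $a_1 > a_2 > \cdots > a_i > b$ of length $i$ above it, so in particular some element lies directly above $b$ on level $i$. I would pick a level-$(i+1)$ element $c$ having the largest number of elements below it (equivalently, maximal in the semiorder vector sense) and compare it against a level-$i$ element $a$ that fails to dominate some other level-$(i+1)$ element $b$; the incomparabilities forced by "$b$ not below $a$" together with the covering chains above $b$ and above $c$ should manufacture either a $\bm{2}+\bm{2}$ (two disjoint comparable pairs, one from the chain above $c$ and one witnessing $a > $ something while $b$ stays incomparable) or a $\bm{3}+\bm{1}$ (a length-two chain above the dominating element plus an isolated incomparable element).

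The main obstacle, and the step I expect to require the most care, is the bookkeeping of incomparabilities: I must verify that the four chosen elements really have exactly the comparability pattern of $\bm{2}+\bm{2}$ or $\bm{3}+\bm{1}$ and no extra relations that would spoil the induced subposet. In particular I will need the semiorder's canonical vector representation $\rho(R)=(r_1,\dots,r_n)$ with $r_1 \geq \cdots \geq r_n$, since the condition $x < y$ in $P$ iff $x < y-1$ in $\mathbb{R}$ gives a clean numerical criterion for comparability: elements on the same level have ``nested'' down-sets, and this monotonicity is what prevents the spurious relations. I would therefore translate the level structure into the real-number representation, show that the failure hypothesis forces two ``crossing'' pairs of reals differing by more than $1$ in an incompatible way, and read off the forbidden pattern directly. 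Once the numerical translation is in place, exhibiting the forbidden induced subposet should be a short computation.
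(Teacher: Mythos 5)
Your setup is fine (nonemptiness of the $(i+1)^{\mathrm{th}}$ level, and the contradiction hypothesis), but the decisive step of the proof is never carried out: at both points where the forbidden subposet should actually be exhibited, you defer (``should manufacture either a $\bm{2}+\bm{2}$ \dots or a $\bm{3}+\bm{1}$'', ``should be a short computation''), and the elements you do name would not produce it. The elements \emph{below} your chosen $c$ lie on levels $i+2$ and lower, so maximizing their number says nothing about which level-$i$ elements dominate level $i+1$; and your level-$i$ element $a$ with $a \not> b$ may dominate \emph{nothing} on level $i+1$, in which case $a$ cannot serve as the top of either comparable pair of a $(\bm{2}+\bm{2})$, and when $i=H=1$ there is no $3$-chain available for a $(\bm{3}+\bm{1})$ either --- the forbidden configuration must then be found among entirely different elements. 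The missing idea, which is exactly what the paper's proof supplies, is to extract from the hypothesis a \emph{crossing pair}: two level-$i$ elements $b,d$ and two level-$(i+1)$ elements $a,c$ with $b>a$, $d>c$, $b\not>c$, $d\not>a$. One clean justification: by Definition~\ref{2.3} every level-$(i+1)$ element lies below some level-$i$ element, so if the down-sets of the level-$i$ elements restricted to level $i+1$ were nested (totally ordered by inclusion), the maximal one would contain all of level $i+1$, contradicting the hypothesis; hence two of these down-sets cross. After that, every other pair among $\{a,b,c,d\}$ is incomparable automatically from the level structure (two elements on the same level are incomparable, and a level-$(i+1)$ element can never exceed a level-$i$ element), so $\{b>a,\ d>c\}$ is an induced $(\bm{2}+\bm{2})$; the $(\bm{3}+\bm{1})$ and the delicate ``bookkeeping of incomparabilities'' you anticipate are not needed at all.

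Ironically, your passing remark that down-sets in a semiorder are nested is essentially the whole proof, but you never deploy it; instead you route it through the real-number representation. That route does yield a correct and even simpler \emph{direct} argument --- take the level-$i$ element $b$ of maximum real value; any level-$(i+1)$ element $c$ satisfies $v(c) < v(d)-1 \le v(b)-1$ for the level-$i$ element $d$ above it, so $b>c$ --- but it rests on the Scott--Suppes representation theorem, machinery far heavier than the proposition, and note that once you have the representation there is no ``forbidden pattern'' left to read off: the contradiction is purely numerical. Since you complete neither the combinatorial nor the numerical version, the proposal as written is a plan whose central step is absent.
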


\begin{proof}
Suppose that there does not exist an element on the $i^{\mathrm{th}}$ level
that is larger than all elements on the $(i+1)^{\mathrm{th}}$ level. Since
every element on the $(i+1)^{\mathrm{th}}$ level must be smaller than at least
one element on the $i^{\mathrm{th}}$ level, there must exist two elements $a$
and $c$ on the $(i+1)^{\mathrm{th}}$ level which are smaller than two distinct
elements $b$ and $d$ on the $i^{\mathrm{th}}$ level, respectively, and $b$ is
not larger than $c$, while $d$ is not larger than $a$.  Then $\{b>a$,
$d>c\}$ forms a $(\bm{2}+\bm{2})$-structure,  a contradiction.
Therefore, at least one element on the $i^{\mathrm{th}}$ level is larger than
all elements on the $(i+1)^{\mathrm{th}}$ level.  
\end{proof}

\begin{proposition}\label{2}
For a length $H$ semiorder, and for $1\leq i < j \leq H+1$, $j-i\geq
2$, every element on the $i^{\mathrm{th}}$ level is larger than all
elements on the $j^{\mathrm{th}}$ level.
\end{proposition}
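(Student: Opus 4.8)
The plan is to exploit the fact that the level of an element (Definition~\ref{2.3}) is governed by \emph{longest descending chains}, and to show that an incomparability across a gap of two or more levels would necessarily expose a forbidden $(\bm{3}+\bm{1})$. The whole argument will rest on $(\bm{3}+\bm{1})$-freeness; $(\bm{2}+\bm{2})$-freeness is not needed here.

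First I would record two observations that follow from the definition of level by the standard ``chain-extension'' trick. (i) If $u>v$, then the longest descending chain ending at $u$ can be lengthened by appending $v$, so $\mathrm{level}(v)\ge\mathrm{level}(u)+1$; hence every comparability points from a \emph{smaller} level number to a strictly larger one, and in particular any two elements sharing a level are incomparable. (ii) If $c$ lies on the $j^{\mathrm{th}}$ level, then a longest descending chain ending at $c$, say $c_1>c_2>\dots>c_j=c$, passes through every intermediate level \emph{exactly once}: $c_k$ lies on the $k^{\mathrm{th}}$ level for each $1\le k\le j$. Indeed, were some $c_k$ on a level exceeding $k$, splicing a longer chain ending at $c_k$ onto $c_k>\dots>c_j$ would produce a chain ending at $c$ with more than $j$ elements, contradicting $\mathrm{level}(c)=j$.

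Next I would fix $a$ on the $i^{\mathrm{th}}$ level and $c$ on the $j^{\mathrm{th}}$ level with $j-i\ge 2$, and aim to prove $a>c$. By observation (i), $c>a$ is impossible (it would force $\mathrm{level}(c)<\mathrm{level}(a)$), so it suffices to rule out $a\parallel c$; assume this for contradiction. Observation (ii) supplies $c_i$ on the $i^{\mathrm{th}}$ level and $c_{i+1}$ on the $(i+1)^{\mathrm{th}}$ level with $c_i>c_{i+1}>c$, a genuine three-element chain (the step $c_{i+1}>c$ uses $i+1\le j-1$, which holds since $j\ge i+2$). Now I would compare $a$ with $c_{i+1}$. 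The relation $c_{i+1}>a$ is barred by (i); if instead $a>c_{i+1}$, then $a>c_{i+1}>c$ yields $a>c$, contradicting $a\parallel c$; and if $a\parallel c_{i+1}$, then the four elements $c_i>c_{i+1}>c$ together with $a$ --- which is incomparable to $c$ (the standing assumption), to $c_{i+1}$ (this case), and to $c_i$ (same level, by (i)) --- form an induced $(\bm{3}+\bm{1})$, contradicting that a semiorder is $(\bm{3}+\bm{1})$-free. Every case is contradictory, so $a>c$.

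I expect the only real care to lie in observation (ii) --- verifying that the witnessing chain visits the $k^{\mathrm{th}}$ level \emph{exactly}, which is precisely what guarantees that both an $i^{\mathrm{th}}$-level and an $(i+1)^{\mathrm{th}}$-level element sit above $c$ --- and in checking that the four elements in the final case are genuinely distinct (they lie on the levels $i$, $i+1$, $j$ with $i<i+1<j$, and $a$ is incomparable to each of the other three). This bookkeeping should be the main, though modest, obstacle; the conceptual heart is simply that a comparability ``skipped'' across two levels always reveals a $(\bm{3}+\bm{1})$. I note that Proposition~\ref{marker} could alternatively be invoked to manufacture the required three-chain through marker elements, but the canonical level-chain beneath $c$ furnished by (ii) seems the most economical route.
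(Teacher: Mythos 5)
Your proof is correct and follows essentially the same route as the paper's: assume an element on level $i$ fails to dominate an element on level $j$, take a longest chain witnessing the level of the deeper element (noting it meets each intermediate level exactly once), and exhibit an induced $(\bm{3}+\bm{1})$ to contradict semiorder-ness. The only cosmetic difference is that you select the chain elements at levels $i$, $i+1$, and $j$, while the paper uses those at levels $j-2$, $j-1$, and $j$; the case analysis and the key level observations are the same.
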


\begin{proof}
Assume to the contrary that there exist an element $b$ on the
$i^{\mathrm{th}}$ level and an element $a$ on the $j^{\mathrm{th}}$
level such that $b$ is not larger than $a$.  By Definition \ref{2.3},
there exist $j-1$ elements $a_1, a_2, \dots, a_{j-1}$, such that $a_1
> a_2 >\dots > a_{j-1} >a$, and then $a_{j-2}$ and $a_{j-1}$ should be
on the $(j-2)^{\mathrm{th}}$ and $(j-1)^{\mathrm{th}}$ level,
respectively.  Since element $b$ is on the $i^{\mathrm{th}}$ level,
and $j-i\geq 2$, element $b$ cannot be smaller than any of $a_{j-2},
a_{j-1}$, or $a$.  In addition, since $b$ is not larger than $a$ and
$a_{j-2}> a_{j-1}> a$, $b$ is not comparable with any of $a_{j-2},
a_{j-1}$ or $a$.  Hence, $\{a_{j-2}> a_{j-1}> a$, $b\}$ forms a
$(\bm{3}+\bm{1})$-structure, a contradiction.  Therefore every element
on the $i^{\mathrm{th}}$ level should be larger than all elements on
the $j^{\mathrm{th}}$ level, for $j-i\geq 2$.
\end{proof}

We are now ready to prove the Main Theorem \ref{main}.  We give two
proofs here: one considers the recurrence formulas of the two numbers
in the theorem, and the other directly establishes a bijection between
semiorders and ordered trees.

\subsection{Recurrence proof}
Let $t(n,h,k)$\label{tnhk} denote the number of $(n+1)$-node ordered
trees of height $h+1$, for which exactly $k$ nodes have depth $h+1$,
$1\leq k \leq n$.  Let $f(n,h,k)$ be the number of
$n$-element semiorders of length $h$, and exactly $k$ elements are on
the last level.  We show that $t(n,h,k)$ and $f(n,h,k)$ have the same
initial value and recurrence formula in the following lemmas, and thus
they are equal.

\begin{lemma}\label{lemma2.6}
For $h\geq 1$, we have 
\begin{equation}\label{t}
t(n,h,k)=\sum_{m=1}^{n-k}\binom{m+k-1}{m-1}\cdot t(n-k,h-1,m).
\end{equation}
\end{lemma}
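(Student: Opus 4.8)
The plan is to prove the recurrence by a structural decomposition of the trees counted by $t(n,h,k)$, peeling off the deepest level. Fix an $(n+1)$-node ordered tree $T$ of height $h+1$ that has exactly $k$ nodes at depth $h+1$. Since $h\geq 1$, the depth-$(h+1)$ nodes are leaves, and deleting all $k$ of them produces an ordered tree $T'$ whose maximum depth is $h$. The deletion removes exactly $k$ nodes and leaves every node of depth at most $h$ untouched, so $T'$ has $(n+1)-k=(n-k)+1$ nodes, and its deepest level (depth $h$) is exactly the set of depth-$h$ nodes of $T$. Writing $m$ for the number of these nodes, we have $m\geq 1$, since the depth-$(h+1)$ nodes of $T$ can only appear as children of depth-$h$ nodes; hence $T'$ is an ordered tree of height $h$ with $m$ nodes at its deepest level, that is, one of the trees enumerated by $t(n-k,h-1,m)$. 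Counting non-root nodes shows $1\leq m\leq n-k$, which will give the summation range.

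First I would make this map reversible by recording, alongside $T'$, the data describing how the $k$ deleted leaves hung off the $m$ depth-$h$ nodes. Because $T$ is an ordered tree, this data is precisely a sequence $(c_1,c_2,\dots,c_m)$ of nonnegative integers with $c_1+c_2+\cdots+c_m=k$, where $c_j$ is the number of children that the $j$-th depth-$h$ node (in the left-to-right order inherited from $T'$) carried at depth $h+1$. Conversely, given any tree $T'$ counted by $t(n-k,h-1,m)$ together with such a composition, one reconstructs a unique $T$ by attaching $c_j$ ordered leaves below the $j$-th deepest node; since the attached nodes are leaves, their counts determine the ordered tree completely, and since $k\geq 1$ the reconstructed tree has height exactly $h+1$. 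Thus for each fixed $m$ the trees $T$ with exactly $m$ depth-$h$ nodes are in bijection with pairs consisting of a tree counted by $t(n-k,h-1,m)$ and a weak composition of $k$ into $m$ parts.

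The step that carries the real content is counting these attachment patterns: the number of weak compositions of $k$ into $m$ nonnegative parts is $\binom{m+k-1}{m-1}$ by stars and bars, which is exactly the coefficient appearing in \eqref{t}. Summing the product $\binom{m+k-1}{m-1}\cdot t(n-k,h-1,m)$ over all admissible $m$ from $1$ to $n-k$ then accounts for every $T$ exactly once, grouped by its number of depth-$h$ nodes, yielding the claimed identity. The only points needing care are the boundary conditions, namely verifying that $m\geq 1$ forces $T'$ to genuinely have height $h$ (so the index $h-1$ is correct) and that $k\geq 1$ keeps the reconstructed height at $h+1$ rather than collapsing to something smaller; both follow directly from the definition of height as the deepest occupied level. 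I do not anticipate a serious obstacle: the argument is a clean ``remove and reattach the bottom level'' bijection, and the main thing to get right is the bookkeeping that the leaf-attachment data is a weak composition (empty attachments allowed) rather than an ordered set partition.
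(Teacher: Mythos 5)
Your proof is correct and is essentially the same argument as the paper's: both establish the bijection between height-$(h+1)$ trees with $k$ deepest nodes and pairs consisting of a height-$h$ tree with $m$ deepest nodes plus a weak composition of $k$ into $m$ parts, counted by stars and bars as $\binom{m+k-1}{m-1}$. The only difference is that the paper phrases the bijection in the constructive direction (attaching $k$ new nodes) while you phrase it as deletion followed by reconstruction, which is the same correspondence read in the opposite direction.
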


\begin{proof}
Say we have an $(n-k+1)$-node ordered tree of height $h$, and assume
that exactly $m$ nodes have depth $h$, $1\leq m \leq n-k$.  Consider
adding $k$ nodes to the tree to get a new tree with $n+1$ nodes and
height $h+1$, and the newly added nodes are exactly the set of nodes
of depth $h+1$.  Thus we need to attach the $k$ new nodes to the $m$
nodes of depth $h$, and every new node is uniquely attached to one
node.  Let the $m$ nodes be $A_1, A_2, \dots, A_m$, and the number of
new nodes attached to $A_i$ be $r_i$, $1 \leq i \leq m$.  Then we have
$r_1 + r_2 + \dots + r_m = k, \ \ r_i\geq 0, \hspace{6pt}1 \leq i \leq
m$. 

The number of integer solutions to the above equation is
$\binom{m+k-1}{m-1}$.  Therefore, we have $\binom{m+k-1}{m-1}$ ways to
add the $k$ nodes.  Summing up all possible $m$'s, we obtain 
$$t(n,h,k)=\sum_{m=1}^{n-k}\binom{m+k-1}{m-1}\cdot t(n-k,h-1,m).$$
\end{proof}

\begin{lemma}\label{lemma2.7}
For $h\geq 1$, we have 
\begin{equation}\label{f}
f(n,h,k)=\sum_{m=1}^{n-k}\binom{m+k-1}{m-1}\cdot f(n-k,h-1,m).
\end{equation}
\end{lemma}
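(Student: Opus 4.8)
The plan is to prove that $f(n,h,k)$ satisfies the same recurrence as $t(n,h,k)$ by imitating the tree argument of Lemma~\ref{lemma2.6}: I will strip off the bottom level of a semiorder and count the ways to glue it back on. Given an $n$-element semiorder $P$ of length $h$ with exactly $k$ elements on its last ($(h+1)^{\mathrm{th}}$) level, I first delete those $k$ elements to obtain $P'$. Since every induced subposet of a semiorder is a semiorder, $P'$ is again a semiorder; moreover, deleting only bottom-level elements preserves the level of each surviving element, because the chain $a_1>\cdots>a_{i-1}>x$ witnessing the level of a surviving $x$ uses only elements of level smaller than that of $x$, hence lies entirely in $P'$. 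Thus $P'$ has $n-k$ elements, length exactly $h-1$, and some number $m$ of elements on its new last level. Summing over $m$ will then yield \eqref{f}, provided the number of ways to recover $P$ from a given such $P'$ is $\binom{m+k-1}{m-1}$.

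The heart of the proof is this reconstruction count. I will first show that the $m$ last-level elements of $P'$ carry a canonical linear order: by $(\bm{2}+\bm{2})$-freeness their up-sets into the $(h-1)^{\mathrm{th}}$ level are nested, so they may be listed $A_1,\dots,A_m$ by weakly increasing up-set (ties broken arbitrarily, with $A_1$ the one dominated by fewest elements above). I then claim that a valid re-attachment of the $k$ new bottom elements is recorded, up to isomorphism, by assigning to each new element $x$ the number $s(x)\in\{1,\dots,m\}$ of the $A_i$ lying above it, where its set of dominators is forced to be the initial segment $\{A_1,\dots,A_{s(x)}\}$. Here Proposition~\ref{marker} forces $s(x)\ge 1$ (some $A_i$, namely $A_1$, dominates all of the new level), and Proposition~\ref{2} together with the level structure forces each new element onto level $h+1$. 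Because the new elements are interchangeable in an unlabeled count, the data reduces to a multiset of $k$ values drawn from $\{1,\dots,m\}$, i.e.\ a weakly decreasing sequence $m\ge s_1\ge\cdots\ge s_k\ge 1$, and there are exactly $\binom{m+k-1}{k}=\binom{m+k-1}{m-1}$ of these, matching the coefficient in \eqref{t}.

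The main obstacle is justifying that this assignment is \emph{exactly} right --- that every such multiset yields a genuine semiorder of length $h$ and that distinct multisets yield non-isomorphic ones. The delicate point is $(\bm{3}+\bm{1})$-freeness across the three consecutive levels $h-1,h,h+1$: a careless choice of which $A_i$ dominate a new element $w$ can produce a forbidden chain $u>A_i>w$ (with $u$ on level $h-1$) together with some $A_{i'}$ incomparable to all three, so the dominator set must be an initial segment with respect to the \emph{correct} ordering of the $A_i$ --- the one by increasing up-set into level $h-1$, equivalently the canonical semiorder order. I must verify that $(\bm{3}+\bm{1})$-freeness is precisely what pins this ordering down, while $(\bm{2}+\bm{2})$-freeness among the new elements keeps their dominator sets mutually nested, and that together these conditions admit all $\binom{m+k-1}{m-1}$ initial-segment multisets and no others. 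Once this compatibility is checked, the map $P\mapsto(P',\{s_1,\dots,s_k\})$ is seen to be a bijection, since the multiset is an isomorphism invariant recoverable from $P$; this establishes \eqref{f}, and comparison with \eqref{t} gives $f(n,h,k)=t(n,h,k)$.
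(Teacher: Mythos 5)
Your proposal is correct and follows essentially the same route as the paper: both strip off the last level of the semiorder and count the ways to reattach the $k$ bottom elements to the $m$ last-level elements of the smaller semiorder by a stars-and-bars count, using Proposition~\ref{marker} and Proposition~\ref{2} to see that this attachment data is the only freedom. The only real difference is the encoding of the attachment: you record the multiset of up-set sizes $m \ge s_1 \ge \cdots \ge s_k \ge 1$ of the $k$ deleted elements (deriving the nested initial-segment structure directly from $(\bm{2}+\bm{2})$- and $(\bm{3}+\bm{1})$-freeness), while the paper records the conjugate data, namely the down-set sizes $k = p_1 \ge p_2 \ge \cdots \ge p_m \ge 0$ of the $m$ surviving last-level elements, packaged as the vector of the induced length-one semiorder on the last two levels; both encodings yield exactly $\binom{m+k-1}{m-1}$ choices.
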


\begin{proof}
We say that an element of a semiorder is \emph{good} if the element is
on the last level of the semiorder.  Say we have an $(n-k)$-element
semiorder $S$ of length $h-1$ and $m$ good elements, $1\leq m \leq
n-k$.  Consider adding $k$ elements to $S$ to get a new semiorder $S'$
with $n$ elements and length $h$, and the newly added elements are
exactly the set of good elements of $S'$. Call the original $m$ good
elements $a_1, a_2, \dots, a_m$, and the $k$ new elements $b_1, b_2,
\dots, b_k$.  Then in the semiorder $S'$, we have that $a_1, a_2,
\dots, a_m$ are the only elements on the $h^{\mathrm{th}}$ level, and
$b_1, b_2, \dots, b_k$ are the only elements on the
$(h+1)^{\mathrm{th}}$ level.

If we remove all elements on the first $h-1$ levels of $S'$, then we
get a length one semiorder $P$ with $m+k$ elements, and there are
exactly $m$ elements on the upper level and $k$ elements on the lower
level.  On the other hand, given a semiorder $S$ and a semiorder $P$
as above, we can uniquely determine the semiorder $S'$, because based
on Proposition \ref{2}, the $k$ elements on the $(h+1)^{\mathrm{th}}$
level of $S'$ must be smaller than all elements on the
$i^{\mathrm{th}}$ level of $S'$, for $1 \leq i \leq h-1$.  Therefore,
the semiorder $P$ uniquely determines the way to add the $k$ new
elements.

Let $P$ with $\rho(P)=(p_1, p_2, \dots, p_{m+k})$ represent one such
semiorder.  Then we have  
\begin{equation}\label{newsemiorder}
\begin {cases} k=p_1 \geq p_2 \geq \dots \geq p_m \geq 0 \\ p_{m+1} =
  p_{m+2} = \dots = p_{m+k} = 0. \end{cases} 
\end{equation}

Notice that $\{p_2, p_3, \dots, p_{m}\}$ is an $(m-1)$-element
multiset with elements from $\{0, 1, \dots, k\}$, and thus we have
$\binom{k+1+m-1-1}{m-1} = \binom{m+k-1}{m-1}$ such
multisets. Therefore, there are $\binom{m+k-1}{m-1}$ possible
semiorder $P$'s.  Summing up all possible $m$'s, we have  
$$f(n,h,k)=\sum_{m=1}^{n-k}\binom{m+k-1}{m-1}\cdot f(n-k,h-1,m).$$
\end{proof}

\begin{proof} [Proof of the Main Theorem \ref{main}. ]

For $h=0$, the $(n+1)$-element ordered tree of height $h+1=1$ can only
be the tree with $n$ nodes adjacent to the root; meanwhile, the
$n$-element semiorder of length $0$ can only be the one with $n$
elements and any two of the elements are incomparable.  As a result,
we have 
$$t(n,0,k) = f(n,0,k)= \begin{cases} 1 &\text{if $n=k$}\\ 0 & \text{if
    $n \neq k.$}\end {cases}$$ 

For $h\geq1$, by Lemma \ref{lemma2.6} and \ref{lemma2.7},
$t(n,h,k)$ and $f(n,h,k)$ have the same recurrence formula.
Therefore $t(n,h,k) = f(n,h,k)$ for every $n\geq 1$, $ h\geq 0$, and
$1\leq k \leq n$.  Summing on $k$ completes the proof of Theorem
\ref{main}. 
\end{proof}

\vspace{6pt}
\subsection{Bijective proof}
Recall that an element of a semiorder is good if it is on the last
level of the semiorder.  Based on the idea in the recurrence proof, we
can construct a one-to-one map from $(n+1)$-element ordered trees of height
$H+1$ with $k$ nodes of depth $H+1$ to $n$-element semiorders of length $H$
with $k$ good elements.   

For an ordered tree with $n+1$ nodes and height $H+1$, let us assume that there are $x_i$
nodes of depth $i$, $ 0\leq i \leq H+1$.  Since the
root is the only node of depth $0$, we have
\begin{equation}\label{sum1} 
\sum_{i=1}^{H+1} x_i = n.
\end{equation}

Let $s_j^i$ denote the number of nodes of depth $i$
that are adjacent to the $j^{\mathrm{th}}$ node of depth
$i-1$, $1\leq j \leq x_{i-1}$, $1\leq i \leq
H+1$.  Since every node of depth $i$ should be
adjacent to exactly one node of depth $i-1$, we
must have
\begin{equation}\label{sum2}
\sum_{j=1}^{x_{i-1}}s_j^i = x_i.
\end{equation}

Let $u_j^i=\sum_{k=j}^{x_{i-1}}s_k^i$, $1\leq j \leq x_{i-1}$, $1\leq
i \leq H+1$. Then $u_1^i \geq u_2^i \geq \dots \geq u_{x_{i-1}}^i$.
Let $y_{i} = \sum_{k=1}^{i}x_k$, $1\leq i \leq H+1$, and $y_0=0$. We
now define $R^{i}$ by induction, and let the number of entries in
$R^{i}$ be $y_{i}$. 

Set $R^1=(0, 0, \dots, 0)$, in which there are $x_1=y_1$ zeros.
Assume $R^i = (r_1^i, r_2^i, \dots, r_{y_{i}}^i)$, $1\leq i \leq H$,
and let 
$$R^{i+1}=(r_1^i + x_{i+1}, r_2^i + x_{i+1}, \dots, r_{y_{i-1}}^i +
x_{i+1}, r_{y_{i-1}+1}^i + u_1^{i+1}, r_{y_{i-1}+2}^i + u_2^{i+1},
\dots, r_{y_{i-1}+x_{i}}^i + u_{x_{i}}^{i+1}, 0, 0, \dots, 0)$$ 
in which there are $x_{i+1}$ zeros,  and thus $R^{i+1}$ has $y_{i-1}+x_{i}+x_{i+1} =
y_{i+1}$ entries.    

\vspace{6pt}
\begin{theorem}
The vector $R^{H+1}$ represents an $n$-element semiorder of length $H$
with $x_{H+1}$ good elements.  This gives a bijective map from
$(n+1)$-node ordered trees of height $H+1$ to $n$-element semiorders
of length $H$. 
\end{theorem}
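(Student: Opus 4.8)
The plan is to recognize the explicit recursion for $R^{i}$ as an iteration of the ``append a new bottom level'' operation already analyzed in Lemma~\ref{lemma2.7}, and then to invert that operation. Concretely, I would prove by induction on $i$ that $R^{i}$ is the representing vector $\rho(S_i)$ of a semiorder $S_i$ on $y_i$ elements of length $i-1$, in which the $x_\ell$ coordinates produced at stage $\ell$ are exactly the elements on level $\ell$, and $S_i$ has $x_i$ good elements. Setting $i=H+1$ gives the first assertion; the inductive passage $R^{i}\to R^{i+1}$ is where the tree data $s^{i+1}_j$ enter.

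For the inductive step I would first check that every $R^{i}$ is a legitimate representing vector. Monotonicity is preserved at each stage: adding the constant $x_{i+1}$ to the first $y_{i-1}$ coordinates and the weakly decreasing tail sums $u^{i+1}_j$ to the next $x_i$ coordinates keeps each block weakly decreasing, the two blocks match up at their junction because $u^{i+1}_1=x_{i+1}$, and the appended zeros are dominated since all entries are nonnegative. The inequality $r_j\le n-j$ reduces, for the $p$-th element of level $a$, to $u^{a+1}_p\le x_a+x_{a+1}-p$, which holds because $u^{a+1}_p\le x_{a+1}$ and $p\le x_a$. With this in hand, I would observe that $R^{i}\to R^{i+1}$ is exactly the construction of Lemma~\ref{lemma2.7} applied with the length-one semiorder $P$ whose vector is $(u^{i+1}_1,\dots,u^{i+1}_{x_i},0,\dots,0)$: the requirements \eqref{newsemiorder} hold precisely because $u^{i+1}_1=x_{i+1}$ and the $u^{i+1}_j$ are weakly decreasing. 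Hence $S_{i+1}$ is a semiorder of length $i$ with $x_{i+1}$ good elements. Since the tree has height exactly $H+1$, each depth $0\le\ell\le H+1$ is occupied, so every level of $S_{H+1}$ is nonempty and its length is exactly $H$.

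To prove the map is bijective I would exhibit its inverse, thereby also re-deriving injectivity. Given an $n$-element semiorder $S$ of length $H$, Definition~\ref{2.3} determines its levels and hence $x_1,\dots,x_{H+1}$ (and $x_0=1$). For adjacent levels $i-1$ and $i$, order the level-$(i-1)$ elements by the number of level-$i$ elements below them, let $u^{i}_j$ be that number for the $j$-th of them, and set $s^{i}_j:=u^{i}_j-u^{i}_{j+1}$ (with $u^{i}_{x_{i-1}+1}:=0$); these $s^i_j$ are the numbers of children, which rebuild the ordered tree. One then checks that this recovery inverts the forward recursion coordinate block by coordinate block. Alternatively, since Lemmas~\ref{lemma2.6} and~\ref{lemma2.7} already give $t(n,H,k)=f(n,H,k)$, it suffices to prove the forward map injective, which follows from the very same recovery of the $s^i_j$.

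The main obstacle is verifying that this inverse is well defined---that in an arbitrary semiorder the sets of level-$i$ elements lying below the various level-$(i-1)$ elements are linearly ordered by inclusion, so that the counts $u^{i}_j$ are weakly decreasing, satisfy $u^{i}_1=x_i$, and yield nonnegative differences $s^{i}_j$. This nesting is precisely where $(\bm{2}+\bm{2})$-freeness is needed: two incomparable domination sets would give level-$(i-1)$ elements $b,d$ and level-$i$ elements $a,c$ with $b>a$, $d>c$, $b\not>c$, $d\not>a$, a $(\bm{2}+\bm{2})$-structure, exactly as in the proof of Proposition~\ref{marker}; and $u^{i}_1=x_i$ is the content of Proposition~\ref{marker} itself. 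Confirming this nesting, and that it is exactly the information encoded by the tail-sum definition of $u^{i}_j$, is the crux; the remaining steps are routine index bookkeeping.
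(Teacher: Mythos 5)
Your proposal is correct, and it follows exactly the route the paper has in mind: iterate the ``append a bottom level'' operation of Lemma~\ref{lemma2.7} and invert it level by level. The difference is that the paper deliberately stops short of a proof --- it states that the map ``is naturally derived from the recurrence proof'' and gives only the guiding idea plus a worked example --- whereas you actually supply the missing rigor. The three ingredients you add are precisely the ones needed: (i) the check that each $R^{i}$ is a legitimate representing vector (weak decrease, junction via $u_1^{i+1}=x_{i+1}$, and the ballot inequality $r_j\le n-j$ reducing to $u_p^{a+1}+p\le x_a+x_{a+1}$); (ii) the identification of the step $R^{i}\to R^{i+1}$ with the unique extension in Lemma~\ref{lemma2.7}, using that the appended vector $(u_1^{i+1},\dots,u_{x_i}^{i+1},0,\dots,0)$ satisfies \eqref{newsemiorder}; and (iii) the inverse map, whose well-definedness rests on the nesting of the level-$i$ down-sets of level-$(i-1)$ elements (forced by $(\bm{2}+\bm{2})$-freeness, as in Proposition~\ref{marker}) together with $u_1^i=x_i$ from Proposition~\ref{marker}, so that the counts $u_j^i$ are suffix sizes and the differences $s_j^i=u_j^i-u_{j+1}^i$ rebuild the ordered tree. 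In short, your argument is what a referee would have asked the author to write out; it buys a self-contained proof of the bijection rather than an appeal to the equality $t(n,H,k)=f(n,H,k)$ of the recurrence proof, and your closing observation that injectivity alone would suffice given Lemmas~\ref{lemma2.6} and~\ref{lemma2.7} is a legitimate shortcut.
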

Since this map is naturally derived from the recurrence proof, we do
not give a rigorous proof on why the map is valid and why it is a
bijection.  The main idea here is to map an ordered tree of height
$H+1$ to a semiorder with $H+1$ levels, where the number of elements
on the $i^{\mathrm{th}}$ level of the semiorder is equal to the number
of nodes of depth $i$ in the tree, $1 \leq i \leq H+1$.  We get a
bijection between (a) the connections between nodes of depths $i$ and
$i+1$ in the tree, and (b) the set of ordered pairs between elements
on levels $i$ and $i+1$ in the semiorder, $1 \leq i \leq H$.  This
bijection preserves not only the number of elements but also much
additional structure.  It presents an effective way to connect
semiorders and ordered trees, as well as Dyck paths.  In order to
illustrate the bijection more clearly, we show by an example how the
map works.

\begin{example}
Assume we have the following ordered tree:
$$ \xy 0;/r.20pc/: 
(0,0)*{\bullet}="a";
(-5,-5)*{\bullet}="b";
(5,-5)*{\bullet}="c";
(-5,-10)*{\bullet}="d";
(0,-10)*{\bullet}="e";
(10,-10)*{\bullet}="f";
(-10,-15)*{\bullet}="g";
(0,-15)*{\bullet}="h";
(10,-15)*{\bullet}="i";
(10,-20)*{\bullet}="j";
"a";"b"**\dir{-};
"a";"c"**\dir{-};
"b";"d"**\dir{-};
"c";"e"**\dir{-};
"c";"f"**\dir{-};
"d";"g"**\dir{-};
"d";"h"**\dir{-};
"f";"i"**\dir{-};
"i";"j"**\dir{-};
\endxy$$

Here the number of nodes is 10 and height is 4.  We have $(x_1, x_2, x_3, x_4)=(2, 3, 3, 1)$.   So the tree should correspond
to a semiorder with 9 elements, length 3, and the number of elements
of each depth is given by $(2, 3, 3, 1)$.  Write $s^i=(s_1^i, s_2^i,
\dots, s_{x_{i-1}}^i)$ and $u^i=(u_1^i, u_2^i, \dots, u_{x_{i-1}}^i)$. Then the vectors
representing the connections between two adjacent depth-levels in the
ordered tree are  
$$ s^1=(2), \hspace{6pt}  s^2=(1,2),\hspace{6pt}  s^3=(2, 0,
1),\hspace{6pt} s^4=(0, 0, 1).$$ 

We transform these vectors into vectors that can represent the set of
ordered pairs between two adjacent levels of the semiorder.  These
vectors are 

$$ u^1=(2),\hspace{6pt}  u^2=(3, 2),\hspace{6pt}  u^3=(3, 1,
1),\hspace{6pt} u^4=(1, 1, 1).$$ 

Now let us construct $R^i$, $1\leq i \leq 4$:
\begin{align*}
R^1=(0, 0),\hspace{6pt} R^2=(3, 2, 0, 0, 0),\hspace{6pt} R^3=(6, 5, 3,
1, 1, 0, 0, 0), \hspace{6pt}R^4=(7, 6, 4, 2, 2, 1, 1, 1, 0). 
\end{align*}

In fact, $R^i$ depicts the semiorder with only the first $i$ levels,
$1\leq i \leq 4$, and $R^4$ is the final semiorder we desired.  Its
Hasse diagram is as follows: 

$$ \xy 0;/r.30pc/: 
(0,0)*{\bullet}="b";
(5,0)*{\bullet}="c";
(0,-5)*{\bullet}="d";
(5,-5)*{\bullet}="e";
(10,-5)*{\bullet}="f";
(-2,-10)*{\bullet}="g";
(2,-10)*{\bullet}="h";
(8,-10)*{\bullet}="i";
(10,-15)*{\bullet}="j";
"b";"d"**\dir{-};
"b";"e"**\dir{-};
"b";"f"**\dir{-};
"c";"e"**\dir{-};
"c";"f"**\dir{-};
"c";"g"**\dir{-};
"c";"h"**\dir{-};
"d";"g"**\dir{-};
"d";"h"**\dir{-};
"d";"i"**\dir{-};
"e";"i"**\dir{-};
"f";"i"**\dir{-};
"g";"j"**\dir{-};
"h";"j"**\dir{-};
"i";"j"**\dir{-};
\endxy$$

The inverse map can be done by reversing the steps.
\end{example}

\section{Generating Functions and Explicit Formulas}\label{sec3}
\subsection{On unlabeled semiorders}
Let $f_h^n$ denote the number of nonisomorphic unlabeled semiorders
with $n$ elements and length $h$, and $f_{\leq h}^n$ denote the number
of nonisomorphic unlabeled semiorders with $n$ elements and length at
most $h$, so $f_h^n$ = $f_{\leq h}^n - f_{\leq (h-1)}^n$.  Let $F_h(x)
= \sum_{n=0}^{\infty} f_h^nx^n$, and $F_{\leq h}(x) =
\sum_{n=0}^{\infty} f_{\leq h}^nx^n.$ 

De Bruijn, Knuth, and Rice \cite{first} calculated the generating
function for the number of fixed-height ordered trees in 1972.  Based
on this generating function and Theorem \ref{main}, we have the
following corollary. 

\begin{corollary}
For $h\geq 0$, 
\begin{align}
&\bullet F_h(x) = \sum_{n=0}^{\infty}
  f_h^nx^n=\frac{x^{h+1}}{p_{h+1}(x)p_{h}(x)}\label{3.6}\\ 
&\bullet F_{\leq h}(x) = \sum_{n=0}^{\infty} f_{\leq h}^nx^n =
  \frac{p_{h}(x)}{p_{h+1}(x)}\label{3.7} 
\end{align}
where
\begin{equation}
p_0(x)=1, \ \ \ p_1(x)=1-x, \ \ \  p_{h+1}(x)=p_h(x)-x\cdot p_{h-1}(x).\notag
\end{equation}
\end{corollary}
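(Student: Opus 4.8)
The plan is to transfer the whole problem to Dyck paths via Theorem~\ref{main} and then invoke the height generating function of De~Bruijn, Knuth, and Rice~\cite{first}. Write $G_m(x)$ for the generating function counting Dyck paths of semilength $n$ and height at most $m$ (graded by $x^n$). By Theorem~\ref{main}, for every $n\ge 1$ an $n$-element semiorder of length at most $h$ corresponds to a Dyck path of semilength $n$ and height at most $h+1$ (the constraint ``height at least $1$'' is vacuous once $n\ge 1$), while for $n=0$ the empty semiorder matches the empty path; hence $F_{\le h}(x)=G_{h+1}(x)$. Likewise, since length exactly $h$ corresponds to height exactly $h+1$, the $n=0$ terms cancel and $F_h(x)=G_{h+1}(x)-G_h(x)$.

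First I would pin down $G_m$ as a ratio of the polynomials $p_h$. Decomposing a nonempty height-bounded Dyck path at its first return to the axis gives the standard continued-fraction recursion $G_m = 1/(1-x\,G_{m-1})$ with $G_0=1$. Writing $G_m=B_{m-1}/B_m$ and clearing denominators turns this into $B_m=B_{m-1}-x\,B_{m-2}$ with $B_0=1$ and $B_1=1-x$; these are exactly the recurrence and initial data defining $p_h$, so $B_m=p_m$ and $G_m=p_{m-1}/p_m$ (extending by $p_{-1}:=1$, which makes $G_0=p_{-1}/p_0=1$). This is the De~Bruijn--Knuth--Rice formula, and it yields $F_{\le h}(x)=G_{h+1}(x)=p_h(x)/p_{h+1}(x)$, which is~\eqref{3.7}.

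For~\eqref{3.6} I would subtract: $F_h = G_{h+1}-G_h = \dfrac{p_h}{p_{h+1}}-\dfrac{p_{h-1}}{p_h}=\dfrac{p_h^2-p_{h-1}p_{h+1}}{p_h\,p_{h+1}}$, so everything reduces to the continuant identity $p_h^2-p_{h-1}p_{h+1}=x^{h+1}$. Setting $D_h:=p_h^2-p_{h-1}p_{h+1}$ and substituting $p_{h+1}=p_h-x\,p_{h-1}$ together with $p_h-p_{h-1}=-x\,p_{h-2}$ collapses $D_h$ to $x\,(p_{h-1}^2-p_{h-2}p_h)=x\,D_{h-1}$; with the base value $D_0=p_0^2-p_{-1}p_1=1-(1-x)=x$ this gives $D_h=x^{h+1}$ by induction, which establishes~\eqref{3.6}.

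The routine pieces here are the first-return decomposition and the telescoping of $D_h$. The one point that genuinely needs care, and which I expect to be the main obstacle, is the bookkeeping of index shifts and of the empty object: matching the De~Bruijn--Knuth--Rice height parameter to semiorder length through the ``$+1$'' of Theorem~\ref{main}, checking that the constant terms behave (the empty semiorder sits in every $F_{\le h}$ but in no $F_h$ with $h\ge 0$), and fixing the convention $p_{-1}=1$ so that the $h=0$ instances of both formulas come out correctly. Once the indices are aligned, the algebra is short and the only substantive computation is the identity $p_h^2-p_{h-1}p_{h+1}=x^{h+1}$.
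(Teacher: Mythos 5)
Your proposal is correct, and at the top level it follows the same route as the paper: both reduce the problem, via Theorem~\ref{main}, to counting height-bounded ordered trees (equivalently Dyck paths), and both rest on the De~Bruijn--Knuth--Rice generating function. The difference is that the paper's entire proof \emph{is} the citation --- it quotes the fixed-height generating function from \cite{first} and declares both formulas immediate --- whereas you re-derive everything: the first-return decomposition giving $G_m = 1/(1-xG_{m-1})$, its linearization $G_m = p_{m-1}/p_m$ (recovering the defining recurrence of the $p_h$), and, crucially, the continuant identity $p_h^2 - p_{h-1}p_{h+1} = x^{h+1}$, proved by the telescoping $D_h = xD_{h-1}$. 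That identity is precisely what is needed to pass from \eqref{3.7} to \eqref{3.6} by subtracting $G_{h+1}-G_h$, and the paper never exhibits it or any of the index bookkeeping (the $+1$ shift in Theorem~\ref{main}, the convention $p_{-1}=1$, the treatment of the empty semiorder and empty path), all of which you handle correctly. What the paper's approach buys is brevity; what yours buys is a self-contained, checkable argument whose only external input is Theorem~\ref{main} itself.
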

De Bruijn, Knuth, and Rice \cite{first} also found the explicit
formulas for the number of fixed-height ordered trees.  Based on their
results and Theorem \ref{main}, we have the following corollary: 

\begin{corollary}
$f_{\leq h}^1=f_{\leq h}^0 =1$.  For $n\geq 2$, $h \geq 0$, we have
\begin{align}
& f_{\leq h}^n=(h+3)^{-1}\sum_{1\leq j \leq \frac {h+2}{2}} 4^{n+1}
  \sin^2\left(\frac{j\pi}{h+3}\right)\cos^{2n}\left(\frac{j\pi}{h+3}
  \right).\label{general}  
\end{align}
\end{corollary}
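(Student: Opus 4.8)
The plan is to reduce the statement about semiorders entirely to the known formula for fixed-height ordered trees via Theorem~\ref{main}, so that the only real work is extracting the right closed form from the recurrence for the denominators $p_h(x)$. By Theorem~\ref{main}, $f_{\leq h}^n$ equals the number of $(n+1)$-node ordered trees of height at most $h+1$, and De~Bruijn, Knuth, and Rice already supply both the generating function~\eqref{3.7} and an explicit formula for this quantity; so I would begin by stating precisely which of their formulas I am invoking and then match indices carefully, since their convention counts trees on a given number of nodes of a given height whereas here we count $n$-element semiorders of length at most $h$, introducing the shifts $n \mapsto n+1$ and $h \mapsto h+1$.

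The computational heart is to diagonalize the three-term recurrence $p_{h+1}(x) = p_h(x) - x\, p_{h-1}(x)$ with $p_0(x)=1$, $p_1(x)=1-x$. First I would solve the characteristic equation $\lambda^2 = \lambda - x$, giving roots $\lambda_{\pm} = \tfrac{1}{2}\bigl(1 \pm \sqrt{1-4x}\bigr)$, and write $p_h(x)$ as a Chebyshev-like combination of $\lambda_+^{\,h}$ and $\lambda_-^{\,h}$. Substituting $x = \tfrac{1}{4}\sec^2\theta$ (equivalently $\cos^2\theta = \tfrac{1}{4x}$) trigonometrizes the roots and turns $p_h(x)$ into an expression involving $\sin\bigl((h+2)\theta\bigr)/\sin\theta$ up to a power of $\cos\theta$; the zeros of $p_{h+1}(x)$ then occur at $\theta = \tfrac{j\pi}{h+3}$, which is exactly where the angles $\tfrac{j\pi}{h+3}$ in~\eqref{general} come from. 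I would then perform a partial-fraction expansion of the rational generating function $F_{\leq h}(x) = p_h(x)/p_{h+1}(x)$ over the simple poles $x_j = \tfrac{1}{4}\sec^2(j\pi/(h+3))$, extracting each coefficient via the standard residue formula $-x_j\,p_h(x_j)/p_{h+1}'(x_j)$.

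Reading off the coefficient of $x^n$ from this partial-fraction decomposition gives a sum of $n$-th powers of the reciprocal poles $x_j^{-1} = 4\cos^2(j\pi/(h+3))$, each weighted by its residue; simplifying the residue using the trigonometric closed forms for $p_h(x_j)$ and $p_{h+1}'(x_j)$ produces the factor $4\sin^2(j\pi/(h+3))$ together with the normalization $(h+3)^{-1}$, yielding precisely~\eqref{general}. The hard part will be the residue simplification: one must evaluate $p_h$ and the derivative $p_{h+1}'$ at the poles cleanly, and here the trigonometric substitution is essential, since differentiating $\sin\bigl((h+3)\theta\bigr)$ and converting back through $dx/d\theta = \tfrac{1}{2}\sec^2\theta\tan\theta$ is where the $\sin^2$ and the constant $(h+3)$ emerge. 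I would also isolate the degenerate small cases separately: the base values $f_{\leq h}^0 = f_{\leq h}^1 = 1$ follow directly from~\eqref{3.7} by reading off the constant and linear coefficients of $p_h(x)/p_{h+1}(x)$, and the summation range $1 \le j \le \tfrac{h+2}{2}$ reflects pairing conjugate angles $\theta$ and $\pi - \theta$ (which give the same $\cos^2$ and $\sin^2$), so I would verify that this folding of the full index range $1 \le j \le h+2$ is accounted for correctly, checking the boundary term when $h$ is even and $j = \tfrac{h+2}{2}$ lands on the midpoint.
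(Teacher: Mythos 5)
Your proposal is correct, and it follows the paper's reduction exactly: both use Theorem~\ref{main} to convert the count of $n$-element semiorders of length at most $h$ into the count of $(n+1)$-node ordered trees of height at most $h+1$, and both rest on De Bruijn--Knuth--Rice. The difference is what happens next: the paper's entire ``proof'' of \eqref{general} is the citation of the known explicit tree formula (with the index shifts you note), whereas you go on to rederive that formula from the rational generating function \eqref{3.7} by partial fractions. Your derivation is sound, and it is in essence the original De Bruijn--Knuth--Rice argument: with $x=\tfrac14\sec^2\theta$ one gets $p_h(x)=\sin\bigl((h+2)\theta\bigr)/\bigl(\sin(2\theta)\,(2\cos\theta)^h\bigr)$, the poles of $p_h/p_{h+1}$ are $x_j=\tfrac14\sec^2\bigl(j\pi/(h+3)\bigr)$ for $1\le j\le\lfloor\tfrac{h+2}{2}\rfloor$ --- these are already pairwise distinct, so no folding factor of two enters, and for $h$ odd the midpoint index corresponds to $\cos\theta=0$, which is not a pole --- and the residue computation yields weight $\tfrac{4}{h+3}\sin^2\bigl(j\pi/(h+3)\bigr)$ on $\bigl(4\cos^2(j\pi/(h+3))\bigr)^n$, which is exactly \eqref{general}. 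Two slips to repair in the write-up: the weight multiplying $x_j^{-n}$ is $-p_h(x_j)/\bigl(x_j\,p_{h+1}'(x_j)\bigr)$, not $-x_j\,p_h(x_j)/p_{h+1}'(x_j)$ (your expression is off by a factor of $x_j^2$, though the end result you state is the correct one); and when $h$ is odd, $\deg p_h=\deg p_{h+1}$, so the partial-fraction decomposition carries an additive constant --- harmless, since it only perturbs the coefficient of $x^0$ and the identity is claimed only for $n\ge 2$. What your route buys over the paper's is self-containedness: the corollary no longer depends on quoting the fixed-height tree formula as a black box, at the cost of a page of Chebyshev-style computation that the paper simply outsources.
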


\subsection{On labeled semiorders}
Let $g_h^n$ denote the number of nonisomorphic labeled semiorders with
$n$ elements and length $h$, and $g_{\leq h}^n$ denote the number of
nonisomorphic labeled semiorders with $n$ elements and length at most
$h$. Thus $g_h^n$ = $g_{\leq h}^n - g_{\leq (h-1)}^n$.  Let $G_h =
\sum_{n=0}^{\infty} g_h^n\frac{x^n}{n!}$ and $G_{\leq h} =
\sum_{n=0}^{\infty} g_{\leq h}^n\frac{x^n}{n!}.$ 

We obtain the exponential generating function $G_{h}$ from the
ordinary generating function $F_{h}$ by the following lemma, which is
due to Y. Zhang \cite{Yan}.  We first define equivalence of elements
and then state the lemma. 

\begin{definition}\label{5.5}
Two elements $p$ and $p'$ of a poset $P$ are \emph{equivalent} if 
$$ p'<q \Leftrightarrow p<q, \text{ for all } q\in P$$
and
$$ p'>q \Leftrightarrow p>q, \text{ for all } q\in P.$$
\end{definition}

\vspace{12pt}

\begin{lemma}\label{Yan}
Define the following two operations on an unlabeled poset $P$.
\begin{itemize}
\item The \emph{expansion} of $P$ at $p \in P$ is obtained from $P$ by
  adjoining a new element $p'$ such that $p$ and $p'$ are equivalent. 
\item The \emph{contraction} $c(P)$
of $P$ is a poset $c(P)$ obtained from $P$ by replacing every
equivalence class of elements with a single element. Call a poset $P$
a \emph{seed} if $P = c(P)$. Call a seed $P$  
\emph{rigid} if $P$ has no nontrivial automorphisms. 
\end{itemize}

Let $C$ be a family of unlabeled posets such that $C$ is closed under
expansion and contraction, and all seeds in $C$ are rigid. 
Let $F(x)=\sum_{P\in C}x^{\#P}$ and $G(x)=\sum_{P\in C}D_p
\frac{x^{\#P}}{(\#P)!}$, where $\#P$ is the number of elements in
poset $P$ and $D_p$ is the number of ways to label the elements of
$P$ up to isomorphism, i.e. $D_p=\frac{\#P}{\#(aut\hspace{3pt}P)}$, where $aut\hspace{3pt}P$ is the automorphism group of $P$. Then $G(x) = F(1-e^{-x})$. 
\end{lemma}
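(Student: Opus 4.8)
The plan is to prove the identity $G(x) = F(1-e^{-x})$ by reorganizing the sum defining $G(x)$ according to the seed (contraction) of each poset. The key structural fact, which I would establish first, is that every poset $P \in C$ is obtained from a unique seed $S = c(P) \in C$ by expanding each element of $S$ a certain number of times. Concretely, if $S$ has elements $s_1, \dots, s_m$ (so $m = \#S$), then an expansion of $S$ corresponds to choosing positive integers $b_1, \dots, b_m \geq 1$, where $b_i$ is the size of the equivalence class that collapses to $s_i$; the resulting poset has $\#P = b_1 + \dots + b_m$ elements. Because $C$ is closed under expansion and contraction, this sets up a bijection between $C$ and the set of pairs $(S, (b_1,\dots,b_m))$ with $S$ a seed in $C$ and $b_i \geq 1$. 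The hypothesis that all seeds are \emph{rigid} is what guarantees this correspondence is clean on the labeled side, as I explain below.

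Next I would compute the labeling factor $D_P$ in terms of the seed. Here is where rigidity does the work: I expect that $\#(\mathrm{aut}\, P)$ factors as a product of the automorphisms permuting within each equivalence class, namely $\prod_{i=1}^{m} b_i!$, precisely because the seed $S$ has no nontrivial automorphisms and so no automorphism of $P$ can permute \emph{distinct} equivalence classes among themselves. Thus $\#(\mathrm{aut}\, P) = \prod_i b_i!$, giving $D_P = \dfrac{(\#P)!}{\prod_{i=1}^m b_i!}$, a multinomial coefficient. This is the step I view as the main obstacle: verifying carefully that rigidity forces every automorphism of $P$ to fix each equivalence class setwise (rather than swapping isomorphic classes), so that the automorphism group is exactly the internal product $\prod_i S_{b_i}$.

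With the labeling factor in hand, the generating function manipulation is routine. I would write
\begin{align}
G(x) &= \sum_{P \in C} D_P \frac{x^{\#P}}{(\#P)!} = \sum_{S \text{ seed in } C} \ \sum_{\substack{b_1, \dots, b_m \geq 1 \\ m = \#S}} \frac{(\#P)!}{\prod_i b_i!} \cdot \frac{x^{\sum_i b_i}}{(\#P)!} \notag \\
&= \sum_{S \text{ seed in } C} \prod_{i=1}^{m} \left( \sum_{b_i \geq 1} \frac{x^{b_i}}{b_i!} \right) = \sum_{S \text{ seed in } C} (e^x - 1)^{\#S}. \notag
\end{align}
The cancellation of the two $(\#P)!$ factors and the factorization of the multiple sum into a product of identical single sums $\sum_{b \geq 1} x^b / b! = e^x - 1$ is the heart of the computation.

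Finally I would relate this to $F$. Applying the same seed-decomposition to the ordinary generating function, every $P \in C$ contributes $x^{\#P}$ and grouping by seed gives $F(x) = \sum_{S} (x + x^2 + \dots)^{\#S} = \sum_{S} \left( \frac{x}{1-x} \right)^{\#S}$, so that $F$ is a power series in the single variable $\frac{x}{1-x}$ whose coefficient of $t^m$ is the number of seeds in $C$ with $m$ elements. Comparing, $G(x) = \sum_S (e^x - 1)^{\#S}$ is the \emph{same} power series evaluated at $t = e^x - 1$. Since $t = \frac{u}{1-u}$ solves to $u = \frac{t}{1+t}$, substituting $t = e^x - 1$ gives $u = \frac{e^x - 1}{e^x} = 1 - e^{-x}$, and therefore $G(x) = F(1 - e^{-x})$, as claimed. (Throughout, the empty seed contributes the constant term $1$ to both sides, which is consistent with $F(0) = G(0) = 1$.)
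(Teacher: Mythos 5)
The paper never proves this lemma: it is attributed to Y.~Zhang as a private communication (reference [7]) and used as a black box, so there is no proof of record to compare yours against. Judged on its own merits, your argument is correct and is the natural one. The decomposition of each $P\in C$ as the expansion of its seed $S=c(P)$ with class sizes $(b_1,\dots,b_m)$, the computation $\#(\mathrm{aut}\,P)=\prod_i b_i!$ (hence $D_P=\frac{(\#P)!}{\prod_i b_i!}$), the factorization $G(x)=\sum_S (e^x-1)^{\#S}$, and the comparison with $F(x)=\sum_S \bigl(\tfrac{x}{1-x}\bigr)^{\#S}$ via the substitution $u=1-e^{-x}\Leftrightarrow \tfrac{u}{1-u}=e^x-1$ are all valid steps, and the formal-power-series compositions involved are legitimate since both $1-e^{-x}$ and $\tfrac{x}{1-x}$ have zero constant term. (You also silently correct the statement's typo $D_p=\#P/\#(\mathrm{aut}\,P)$ to $(\#P)!/\#(\mathrm{aut}\,P)$, which is what ``number of labelings up to isomorphism'' requires.)

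One attribution inside your proof is wrong, though it does not sink the argument because the hypothesis you need is available and you essentially prove the required fact elsewhere. You assert that closure under expansion and contraction alone ``sets up a bijection between $C$ and the set of pairs $(S,(b_1,\dots,b_m))$,'' with rigidity needed only ``on the labeled side.'' In fact rigidity is exactly what makes the \emph{unlabeled} correspondence bijective: for a non-rigid seed $S$ (e.g.\ $\bm{2}+\bm{2}$, two disjoint $2$-chains, which is a seed with a nontrivial automorphism), the tuples $(b_1,\dots,b_m)$ and $(b_{\sigma(1)},\dots,b_{\sigma(m)})$ for $\sigma\in\mathrm{aut}\,S$ produce isomorphic expansions, so the map from tuples to unlabeled posets is not injective --- indeed for such a seed one cannot even index the classes by $1,\dots,m$ canonically. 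Your identity $F(x)=\sum_S\bigl(\tfrac{x}{1-x}\bigr)^{\#S}$ depends on this injectivity, so as written its justification is incomplete. The repair is immediate: the same induced-automorphism argument you use to compute $\mathrm{aut}\,P$ (any isomorphism between expansions of $S$ induces an automorphism of $S$, which rigidity forces to be the identity, hence it matches classes and their sizes) proves the injectivity you need. So this is a one-sentence rearrangement of where rigidity is invoked, not a missing idea.
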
 
The class of semiorders of length $h$ is closed under expansion and
contraction.  Zhang has observed that all $(\bm{2}+\bm{2})$-free seeds
are rigid.  Since semiorders are  $(\bm{2}+\bm{2})$-free, all seeds of
semiorders are rigid.  As a result, Lemma \ref{Yan} implies the
following corollary. 
\begin{corollary}
For $h\geq 0$,
\begin{equation}\label{3.8}
G_h(x)=F_h(1-e^{-x})=\frac{(1-e^{-x})^{h+1}}
{p_{h+1}(1-e^{-x})p_{h}(1-e^{-x})} 
\end{equation}
and
\begin{equation}\label{3.10}
G_{\leq h}(x)=F_{\leq h}(1-e^{-x})=F_{\leq h}(1-e^{-x}) =
\frac{p_{h}(1-e^{-x})}{p_{h+1}(1-e^{-x})}. 
\end{equation}
\end{corollary}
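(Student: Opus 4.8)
The plan is to recognize this corollary as a direct application of Lemma~\ref{Yan} to the family $C$ of unlabeled semiorders of a fixed length, combined with the explicit rational expressions for $F_h$ and $F_{\leq h}$ already obtained in \eqref{3.6} and \eqref{3.7}. Concretely, I would first fix $h\geq 0$ and take $C$ to be the set of all nonisomorphic unlabeled semiorders of length exactly $h$ (and, separately, the set of those of length at most $h$). The three hypotheses needed to invoke Lemma~\ref{Yan} are: (i) $C$ is closed under expansion; (ii) $C$ is closed under contraction; and (iii) every seed in $C$ is rigid. Hypothesis (iii) is immediate from the observation recorded just above the statement: semiorders are $(\bm{2}+\bm{2})$-free, and every $(\bm{2}+\bm{2})$-free seed is rigid. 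Hypotheses (i) and (ii) amount to the assertion, also recorded above the statement, that the class of semiorders of length $h$ is closed under expansion and contraction; the only point worth isolating is that these operations preserve the length, so that the ``exactly $h$'' and hence the ``at most $h$'' families are genuinely closed.

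Second, I would match the abstract generating functions of Lemma~\ref{Yan} with the concrete ones. With $C$ the semiorders of length $h$, the series $F(x)=\sum_{P\in C}x^{\#P}$ groups the unlabeled posets by cardinality, so its coefficient of $x^n$ is exactly the number $f_h^n$ of nonisomorphic unlabeled semiorders with $n$ elements and length $h$; that is, $F(x)=F_h(x)$. Likewise $G(x)=\sum_{P\in C}D_p\,x^{\#P}/(\#P)!$ has $x^n/n!$-coefficient equal to $\sum_{\#P=n}D_p$, which is the total number of labeled semiorders on an $n$-set of length $h$, namely $g_h^n$; hence $G(x)=G_h(x)$. Lemma~\ref{Yan} then yields $G_h(x)=F_h(1-e^{-x})$ at once, and substituting the formula \eqref{3.6} for $F_h$ with argument $1-e^{-x}$ produces \eqref{3.8}. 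Running the identical argument with $C$ the family of semiorders of length at most $h$ --- closed precisely because length is preserved --- gives $G_{\leq h}(x)=F_{\leq h}(1-e^{-x})$, and \eqref{3.7} then yields \eqref{3.10}.

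The one place where I would spend actual care, and what I regard as the main (though modest) obstacle, is verifying the closure and length-preservation claims underpinning hypotheses (i) and (ii), since Lemma~\ref{Yan} says nothing about this class unless they hold. For expansion, adjoining an element $p'$ equivalent to $p$ (Definition~\ref{5.5}) leaves $p,p'$ incomparable and gives them identical relations to every other element. I would argue by a short case analysis on how many of the twin pair $\{p,p'\}$ can lie in a putative induced $(\bm{2}+\bm{2})$ or $(\bm{3}+\bm{1})$: if at most one twin appears, the forbidden configuration transports verbatim to one already present in $P$; if both appear, their mutual incomparability together with their shared relations to the remaining elements contradicts the comparabilities demanded by the forbidden pattern. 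The same identical-relations principle shows that every maximal chain through $p'$ has a counterpart of equal length through $p$, so the longest chain, and thus the length, is unchanged; contraction is the reverse operation and is handled symmetrically. Once these routine verifications are in hand, the corollary follows immediately from Lemma~\ref{Yan} by substitution into \eqref{3.6} and \eqref{3.7}.
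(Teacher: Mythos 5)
Your proposal is correct and follows essentially the same route as the paper: the paper likewise obtains the corollary by applying Lemma~\ref{Yan} to the class of fixed-length semiorders (noting closure under expansion and contraction, and rigidity of seeds via $(\bm{2}+\bm{2})$-freeness) and then substituting $1-e^{-x}$ into the formulas \eqref{3.6} and \eqref{3.7}. The only difference is that the paper simply asserts the closure and rigidity facts in the paragraph preceding the corollary, whereas you verify them explicitly; your case analysis on the twin pair and the length-preservation argument are both sound.
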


\section{Recurrence Relations}\label{sec5}
The generating functions and explicit formulas for the number of
semiorders of fixed length are complicated, but there are some concise
recurrence relations underneath.  We will discuss two useful
recurrence formulas in this section.  The first recurrence formula
(\ref{eq4.1}) is a standard result that is known for ordered trees
\cite[p.17]{first}, but only a proof using generating functions was
given, while the second recurrence formula (\ref{eq4.2}) is not
obvious for ordered trees.  We will provide concise combinatorial
proofs for both formulas.  In this way, we can better understand the
relations between fixed-length semiorders with different numbers of
elements.   

\subsection{Recurrence formula 1}
\begin{theorem}\label{5.7}
For $n \geq 2$ and $h \geq 1$, 
\begin{equation}\label{eq4.1}
f_{\leq h}^{n}=\sum_{t=0}^{n-1} f_{\leq h}^{t}f_{\leq h-1}^{n-1-t},
\end{equation}
where $f_{\leq h}^0=1.$
\end{theorem}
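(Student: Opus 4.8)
The plan is to give a bijective (combinatorial) proof of the recurrence $f_{\leq h}^{n}=\sum_{t=0}^{n-1} f_{\leq h}^{t}f_{\leq h-1}^{n-1-t}$ by working directly with semiorders and decomposing them according to a distinguished element. The right-hand side has the shape of a convolution, which strongly suggests splitting an $n$-element length-$\leq h$ semiorder into two independent pieces of sizes $t$ and $n-1-t$, with one element ($1$ element, accounting for the $-1$) playing a special role and with the second factor having length dropped by one.

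First I would identify the canonical distinguished element. Since each semiorder is represented by its vector $\rho(R)=(r_1,\dots,r_n)$ with $r_1\geq r_2\geq\cdots\geq r_n\geq 0$, a natural choice is the unique minimal-rank behavior at the top level: among the elements on the first level (Definition~\ref{2.3}), Proposition~\ref{marker} guarantees there is at least one first-level element larger than every element on the second level. I would single out a canonical such ``dominating'' top element $x$ — for instance, the one corresponding to the last coordinate among first-level elements, so that it is largest over everything below the first level. Removing $x$ should split the remaining $n-1$ elements into two groups: those comparable with $x$ (i.e.\ strictly below $x$), which form a semiorder of length $\leq h-1$ because $x$ sat at the top of a maximal chain, and those incomparable with $x$, which are exactly the other first-level elements together with their downsets and form a semiorder of length $\leq h$. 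The convolution index $t$ would record the size of the incomparable part.

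The key steps, in order, are: (1) make precise the choice of $x$ so that it is canonical and isomorphism-invariant; (2) show that deleting $x$ and partitioning the rest into the ``strictly-below-$x$'' block $B$ and the ``incomparable-to-$x$'' block $A$ yields a well-defined pair of semiorders, using Propositions~\ref{marker} and~\ref{2} to argue that $B$ has length $\leq h-1$ (since $x$ capped a longest chain, everything strictly below $x$ lives on levels $2$ through $h+1$, hence has length $\leq h-1$ after removing the top) and that $A$ retains length $\leq h$; (3) verify that there are no cross-relations between $A$ and $B$ that get lost — that is, that the original order on $R\setminus\{x\}$ is exactly the disjoint order on $A$ and $B$, which is where the semiorder-forbidden-subposet structure must be invoked to rule out stray comparabilities; and (4) construct the inverse map, taking a pair $(A,B)$ of semiorders of the appropriate lengths and sizes and reassembling $R$ by adjoining a new top element $x$ above all of $B$ and incomparable to all of $A$, checking that the result is again $(\bm{2}+\bm{2})$- and $(\bm{3}+\bm{1})$-free of length $\leq h$ and that the distinguished element recovered by the rule from step~(1) is precisely $x$.

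The main obstacle will be step~(3) together with the well-definedness of the canonical choice in step~(1): I must ensure the decomposition is both reversible and independent of any arbitrary labeling, so that summing over the size $t=|A|$ with $t$ ranging from $0$ to $n-1$ recovers exactly the convolution, with $|B|=n-1-t$ and $B$ living one length-level lower. In particular I would need to confirm that adjoining $x$ cannot create a forbidden induced subposet — the $(\bm{3}+\bm{1})$ danger arises if some chain of length $3$ in $B$ sits incomparable to an element of $A$, so the argument must show $x$ absorbs such configurations or that they were already excluded in $R$. If a clean canonical element proves awkward, an alternative I would fall back on is an algebraic proof multiplying the generating-function identity $F_{\leq h}(x)=p_h(x)/p_{h+1}(x)$ from~(\ref{3.7}) against the recurrence $p_{h+1}=p_h-x\,p_{h-1}$ to derive $F_{\leq h}(x)=1+x\,F_{\leq h}(x)\,F_{\leq h-1}(x)$, whose coefficient extraction yields~(\ref{eq4.1}) immediately; but since the paper explicitly seeks a concise combinatorial proof, the decomposition approach is the one I would pursue first.
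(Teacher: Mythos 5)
Your proposed decomposition breaks down at precisely the step you flagged as the main obstacle, and the failure is structural, not technical. If $x$ is a first-level element lying above every second-level element (Proposition~\ref{marker}), then by Proposition~\ref{2} it lies above \emph{every} element not on the first level, so the set $A$ of elements incomparable to $x$ is exactly the set of remaining maximal elements --- an antichain. (It cannot contain ``the other first-level elements together with their downsets'': those downsets lie below $x$, i.e.\ inside $B$; also note that the \emph{last} coordinate among first-level elements is the least dominating one, not the most, so your two descriptions of $x$ conflict.) The induced order on $R\setminus\{x\}$ is genuinely not the disjoint order on $A$ and $B$: in the semiorder with $\rho(R)=(1,1,0)$ the single element of $A$ is above the single element of $B$, and this cross-relation is lost by your map, so $(1,0,0)$ and $(1,1,0)$ collapse to the same pair (one point, one point). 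Injectivity also fails by pure counting: since $A$ is always an antichain, the pairs you produce number at most $\sum_t f_{\leq h-1}^{n-1-t}$, which for $h=1$, $n=3$ is $3<4=f_{\leq 1}^3$; an antichain factor can never generate the convolution $\sum_t f_{\leq h}^{t}f_{\leq h-1}^{n-1-t}$. The inverse map fails just as concretely: if $A$ contains a relation $a>b$ and $B$ contains any element $c$, then after adjoining $x$ above $B$ and incomparable to $A$, the elements $\{x>c,\ a>b\}$ form a $(\bm{2}+\bm{2})$, so the reassembled poset is not even a semiorder. This last point shows that no decomposition into order-disjoint pieces can prove~(\ref{eq4.1}); any correct splitting must come with a canonical rule for reinstating cross-relations.

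That is exactly what the paper's proof supplies, and it is where the two approaches genuinely diverge. The paper takes $a_1$ to be the \emph{rightmost} first-level element (Definition~\ref{rightmost}), i.e.\ the least dominating maximal element, and lets $A_2$ be the downward cascade from $a_1$: $T_1=\{a_1\}$, and $T_{i+1}$ is the set of level-$(i+1)$ elements below something in $T_i$. Then $S_1$ is the semiorder induced on the complement $A_1$ (length $\leq h$, $t$ elements) and $S_3$ is the cascade minus $a_1$ (length $\leq h-1$, $n-1-t$ elements). Cross-relations between the parts are not claimed to be absent; instead the reverse map reconstructs them by a fixed rule --- every element on level $i-1$ of $S_1$ is placed above all elements on level $i$ of $S_2$ --- and the rightmost-ness of $a_1$ is what guarantees this rule recovers the original semiorder. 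Your fallback argument, combining $F_{\leq h}=p_h/p_{h+1}$ from~(\ref{3.7}) with $p_{h+1}=p_h-x\,p_{h-1}$ to get $F_{\leq h}=1+x\,F_{\leq h}F_{\leq h-1}$, is correct and yields~(\ref{eq4.1}) upon extracting coefficients; but it is precisely the generating-function proof the paper notes was already known (for ordered trees) and explicitly sets out to replace with a combinatorial one, so it cannot stand in for the bijective argument you intended to give.
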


\begin{proof}
Let us prove this theorem by first defining the relative positions of elements on the same level of a semiorder.
\vspace{6pt}
\begin{definition}\label{rightmost}
For elements $a$ and $b$ on the same level of a semiorder $S$, we say that element $b$ is \emph{to the right of} element $a$ if $b$ is smaller than more elements than $a$ is, or $b$ and $a$ are smaller than the same number of elements while $b$ is larger than fewer elements than $a$ is.  
\end{definition}

\begin{rmk}
The above definition is unique up to isomorphism.  In fact, if semiorder $S$ has $n$ elements and say the
integer vector corresponding to semiorder $S$, as discussed in Section
\ref{ch1}, is $(r_1, r_2, \dots, r_n)$, then element $b$ is \emph{to the right
  of} element $a$ if $b$ corresponds to $r_j$, while $a$ corresponds
to $r_i$, $i<j$.  For detailed basic properties of semiorders, see \cite{linearextension}.
\end{rmk}

\vspace{6pt}
Let us now prove Theorem \ref{5.7}.  Let $a_1$ be the \emph{rightmost} element on the first level of $S$,
and let $T_1=\{a_1\}$.  Once $T_i$ is defined, let $T_{i+1}$ be the
set of elements on the $(i+1)^{\mathrm{th}}$ level, each of whose elements is
smaller than at least one element in $T_i$, $1 \leq i \leq h$.   For a
given semiorder $S$, the set $T_i$ is uniquely determined, $1 \leq i
\leq h$.  Notice that it is possible that $T_i=\emptyset$, for some
$i$, $1 \leq i \leq h$, and if $T_i=\emptyset$, then we must have
$T_j=\emptyset$ for all $i \leq j \leq h+1$.  Let $A_2 = T_1 \cup T_2
\cup \dots \cup T_{h+1}$, and $A_1 = A - A_2$, where $A$ is the set of
all elements of $S$.  Since $a_1 \in A_2$, we must have $1\leq |A_2|
\leq n$, $0\leq |A_1| \leq n-1$, and $|A_1|+|A_2|=n$. 

Let us separate $S$ into two semiorders $S_1$ and $S_2$.  Let $S_1$ be
the induced semiorder with element set $A_1$. Similarly, let $S_2$ be
the induced semiorder with element set $A_2$.  Let $S_3$ be the
semiorder obtained from $S_2$ by removing element $a_1$.  Then for a
given semiorder $S$, we have that $S_1$, $S_2$, $S_3$ are uniquely
defined.  Since $S$ is a semiorder of length at most $h$, semiorders
$S_1$ and $S_2$ have length at most $h$, and thus $S_3$ has length at
most $h-1$.   

Assume  $|A_1|=t$, so $S_3$ is a semiorder with $n-1-t$ elements.  As
a result, for a given $n$-element semiorder $S$ of length at most $h$,
we can uniquely obtain a pair of semiorders $S_1$ and $S_3$, of length
at most $h$ and at most $h-1$, and with $t$ and $n-1-t$ elements,
respectively, $0\leq t \leq n-1$. 

For example, if $S$ is as follows,
$$ \xy 0;/r.25pc/: 
(0,0)*{\bullet}="b";
(5,0)*{\bullet}="c";
(0,-5)*{\bullet}="d";
(5,-5)*{\bullet}="e";
(10,-5)*{\bullet}="f";
(-2,-10)*{\bullet}="g";
(2,-10)*{\bullet}="h";
(8,-10)*{\bullet}="i";
(10,-15)*{\bullet}="j";
"b";"d"**\dir{-};
"b";"e"**\dir{-};
"b";"f"**\dir{-};
"c";"e"**\dir{-};
"c";"f"**\dir{-};
"c";"g"**\dir{-};
"c";"h"**\dir{-};
"d";"g"**\dir{-};
"d";"h"**\dir{-};
"d";"i"**\dir{-};
"e";"i"**\dir{-};
"f";"i"**\dir{-};
"g";"j"**\dir{-};
"h";"j"**\dir{-};
"i";"j"**\dir{-};
\endxy$$

\noindent the corresponding $S_1$, $S_2$, $S_3$ are:
$$ \xy 0;/r.25pc/: 
(-25,0)*{\bullet}="b1";
(-25,-5)*{\bullet}="c1";
(-30,-10)*{\bullet}="d1";
(-20,-10)*{\bullet}="e1";
(-25,-20)*{S_1};
"b1";"c1"**\dir{-};
"c1";"d1"**\dir{-};
"c1";"e1"**\dir{-};
(0,0)*{\bullet}="a";
(-5,-5)*{\bullet}="b";
(5,-5)*{\bullet}="c";
(0,-10)*{\bullet}="d";
(0,-15)*{\bullet}="e";
(0,-20)*{S_2};
"a";"b"**\dir{-};
"a";"c"**\dir{-};
"b";"d"**\dir{-};
"c";"d"**\dir{-};
"d";"e"**\dir{-};
(20,-5)*{\bullet}="b3";
(30,-5)*{\bullet}="c3";
(25,-10)*{\bullet}="d3";
(25,-15)*{\bullet}="e3";
(25,-20)*{S_3};
"b3";"d3"**\dir{-};
"c3";"d3"**\dir{-};
"d3";"e3"**\dir{-};
\endxy$$

On the other hand, given a pair of semiorders $S_1$ and $S_3$, of
length at most $h$ and at most $h-1$, and with $t$ and $n-1-t$
elements, respectively, $0\leq t \leq n-1$,  we can first add an
element $a_1$ to $S_3$, and let $a_1$ be larger than all other
elements in $S_3$.  Let us call the new semiorder $S_2$.  Then $S_2$
has $n-t$ elements and length at most $h$.   

Let us construct a new semiorder $S$ by combining $S_1$ and $S_2$ as
follows.  The elements on the $i^{\mathrm{th}}$ level of $S$ are the elements
on the $i^{\mathrm{th}}$ level of $S_1$ and $S_2$, $1\leq i \leq h+1$, and the
order relations in $S_1$ and $S_2$ are preserved.  In addition,  let
every element on the $(i-1)^{\mathrm{th}}$ level of $S_1$ be larger than all
elements on the $i^{\mathrm{th}}$ level of $S_2$.   

This construction uniquely gives an $n$-element semiorder $S$ of
length at most $h$, and if we separate $S$ into two semiorders by the
method discussed above, we get back $S_1$ and $S_3$. 

As a result, there is a one-to-one map between $n$-element semiorders
$S$ of length at most $h$, and pairs of semiorders $S_1$ and $S_3$, of
length $\leq h$ and $\leq h-1$, and with $t$ and $n-1-t$ elements,
respectively, $0\leq t \leq n-1$. 

Therefore, summing up possible $t$'s, we have
$$f_{\leq h}^{n}=\sum_{t=0}^{n-1} f_{\leq h}^{t}f_{\leq h-1}^{n-1-t}.$$
\end{proof}

\subsection{Recurrence formula 2}

\begin{theorem}\label{5.1}
For $n \geq 2$ and $h \geq 1$, we have 
\begin{equation}\label{eq4.2}
f_{\leq h}^n =
\sum_{k=1}^{\lfloor\frac{h+2}{2}\rfloor}(-1)^{k-1}\binom{h+2-k}{k}f_{\leq
  h}^{n-k}. 
\end{equation}
\end{theorem}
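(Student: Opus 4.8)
The plan is to derive the recurrence directly from the rational generating function supplied by the identity \eqref{3.7}, namely $F_{\leq h}(x)=p_h(x)/p_{h+1}(x)$. Clearing the denominator gives the polynomial identity $p_{h+1}(x)\,F_{\leq h}(x)=p_h(x)$, and the content of the theorem is that the recurrence coefficients $(-1)^{k-1}\binom{h+2-k}{k}$ in \eqref{eq4.2} are, up to sign, exactly the coefficients of the denominator $p_{h+1}(x)$. So the task splits into two steps: first, pin down the closed form of $p_{h+1}(x)$; second, read off the coefficient of $x^n$ on both sides of $p_{h+1}(x)F_{\leq h}(x)=p_h(x)$.

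First I would prove, by induction on $m$, the explicit formula
\[
p_m(x)=\sum_{k\geq 0}(-1)^k\binom{m+1-k}{k}x^k,
\]
where the sum terminates automatically at $k=\lfloor (m+1)/2\rfloor$ since $\binom{m+1-k}{k}=0$ once $2k>m+1$. The base cases $p_0=1$ and $p_1=1-x$ are immediate. For the inductive step I substitute the formulas for $p_m$ and $p_{m-1}$ into the defining recurrence $p_{m+1}=p_m-x\,p_{m-1}$; shifting the index in the $-x\,p_{m-1}$ sum and collecting the coefficient of $x^k$ produces $(-1)^k\bigl[\binom{m+1-k}{k}+\binom{m+1-k}{k-1}\bigr]$, which by Pascal's rule equals $(-1)^k\binom{m+2-k}{k}$, exactly the claimed coefficient for $p_{m+1}$. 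Taking $m=h+1$ gives $p_{h+1}(x)=\sum_{k=0}^{\lfloor (h+2)/2\rfloor}(-1)^k\binom{h+2-k}{k}x^k$, so the coefficients of $p_{h+1}$ are precisely the signed binomials in \eqref{eq4.2}.

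With this in hand I would extract the coefficient of $x^n$ from $p_{h+1}(x)F_{\leq h}(x)=p_h(x)$. The left side contributes $\sum_{k\geq 0}(-1)^k\binom{h+2-k}{k}f_{\leq h}^{\,n-k}$, while the right side contributes $[x^n]p_h(x)=(-1)^n\binom{h+1-n}{n}$. Isolating the $k=0$ term and moving the rest across yields the exact identity
\[
f_{\leq h}^{\,n}=\sum_{k=1}^{\lfloor (h+2)/2\rfloor}(-1)^{k-1}\binom{h+2-k}{k}f_{\leq h}^{\,n-k}+(-1)^n\binom{h+1-n}{n},
\]
valid for all $n\geq 0$, and the homogeneous recurrence \eqref{eq4.2} is exactly the case in which the correction term $(-1)^n\binom{h+1-n}{n}$ vanishes.

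The main obstacle is therefore not the algebra but pinning down the range of $n$: the recurrence is really the assertion $[x^n]p_h(x)=0$, which holds precisely once $n$ exceeds $\deg p_h=\lfloor (h+1)/2\rfloor$; for small $n$ (for instance $n=2$, $h=3$, where $f_{\leq 3}^{2}=2$ but the bare right-hand side of \eqref{eq4.2} evaluates to $1$) the correction term is genuinely nonzero, so the hypothesis ``$n\geq 2$'' should be read together with this degree bound. I would accordingly state the exact identity above for all $n$ and then specialise. As an alternative—presumably the \emph{combinatorial} route the paper has in mind—one can interpret $\binom{h+2-k}{k}$ as the number of $k$-subsets of $\{1,\dots,h+1\}$ with no two consecutive elements (equivalently, the size-$k$ matchings of the path on $h+2$ vertices) and prove $\sum_k(-1)^k\binom{h+2-k}{k}f_{\leq h}^{\,n-k}=0$ by a sign-reversing involution on pairs consisting of a height-$(\leq h+1)$ Dyck path of semilength $n$ (the object enumerated by $f_{\leq h}^{\,n}$ via Theorem~\ref{main}) together with a choice of non-consecutive marked levels. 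Making that involution and its fixed points precise, and again controlling the small-$n$ boundary where the correction term survives, is the delicate part.
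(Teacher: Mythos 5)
Your proof is correct, and it takes a genuinely different route from the paper. The paper's argument in Section~\ref{sec5} is purely combinatorial: it introduces \emph{bad} elements (Definition~\ref{badelement}), shows every semiorder has one (Proposition~\ref{5.3}), and runs an inclusion--exclusion over the ways of adjoining $k$ elements at $k$ nonadjacent levels, with ``hung'' elements and ``disguises'' absorbing degenerate cases; the coefficient $\binom{h+2-k}{k}$ enters exactly as you guessed, as the number of choices of $k$ nonadjacent levels among $1,\dots,h+1$. You instead read the recurrence off the rational generating function \eqref{3.7}: the Pascal induction giving $p_m(x)=\sum_{k\geq 0}(-1)^k\binom{m+1-k}{k}x^k$ is routine, and extracting $[x^n]$ from $p_{h+1}(x)F_{\leq h}(x)=p_h(x)$ is immediate. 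What the paper's route buys is self-containedness and combinatorial content (the stated purpose of that section); your route leans on \eqref{3.7}, which the paper imports from de Bruijn--Knuth--Rice together with Theorem~\ref{main} --- logically legitimate, not circular, but not combinatorial. What your route buys is precision: you obtain the exact inhomogeneous identity with correction term $(-1)^n\binom{h+1-n}{n}=[x^n]p_h(x)$. The sign-reversing involution you sketch at the end is left incomplete, but it is not needed; the generating-function argument is already a complete proof.

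Moreover, your boundary analysis exposes a real error in the statement as printed: the theorem is false whenever $2\leq n\leq \lfloor (h+1)/2\rfloor$, and your counterexample checks out. For $n=2$, $h=3$ one has $f_{\leq 3}^2=2$ (chain and antichain), while the right-hand side of \eqref{eq4.2} is $\binom{4}{1}f_{\leq 3}^1-\binom{3}{2}f_{\leq 3}^0=4-3=1$; your correction term $(-1)^2\binom{2}{2}=1$ accounts exactly for the discrepancy. One can even locate where the paper's combinatorial proof breaks in this regime: the claim that $\sum_{k}(-1)^{k-1}|\mathcal{S}_{r'}^k|=0$ for every $r'\in\mathcal{R'}$ rests on the vanishing of the alternating sum $\sum_{t'}(-1)^{t'}\binom{m'}{t'}$, which requires $m'\geq 1$ bad elements; but $\mathcal{R'}$ contains the empty semiorder, with $m'=0$, and when $h\geq 2n-1$ the empty semiorder genuinely arises as the base of disguises (for $n=2$, $h=3$, adjoin hung elements at levels $2$ and $4$), contributing $\pm 1$ rather than $0$. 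So the hypothesis of Theorem~\ref{5.1} should be strengthened to $2n\geq h+2$, equivalently $n>\deg p_h=\lfloor (h+1)/2\rfloor$, which is exactly the condition you isolate; on that corrected range your proof is complete.
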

\vspace{6pt}

\begin{proof} Let us prove this theorem by first defining bad elements
  of a semiorder and then considering removing one or more bad
  elements from a given semiorder. 
\vspace{12pt}
\begin{definition}\label{badelement}
We say an element of a semiorder is \emph{bad} if the following two
conditions hold. 
\\$\bullet$ It is on the first level, or it is smaller than all
elements on the level immediately above it,  
\\and
\\$\bullet$ it is on the last level, or it is not larger than any
element on the level immediately below it. 
\end{definition}

\begin{rmk}\label{5.6}
By the above definition, there are no two adjacent levels which both
have bad elements.  In addition, by Proposition  \ref{2}, if two bad
elements are on the same level, they must be equivalent.  Therefore
there is at most one non-equivalent bad element on each level.  We
thus only consider one bad element on each level. 
\end{rmk}

\begin{proposition}\label{5.3}
For every semiorder, there exist bad elements.
\end{proposition}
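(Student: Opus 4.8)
The plan is to locate an explicit bad element as the \emph{rightmost} element (in the sense of Definition~\ref{rightmost}) of a suitably chosen level. Throughout I use the real-line representation of the semiorder from Section~\ref{ch1}: assign to each element a value $\mathrm{val}(\cdot)\in\rr$ with $x<y$ iff $\mathrm{val}(x)<\mathrm{val}(y)-1$. For a level $i$ with $1\le i\le H+1$ (all of which are nonempty, since a longest chain meets every level), let $m_i$ denote its rightmost element; by Definition~\ref{rightmost} together with the nested structure of principal up-sets and down-sets in a $(\bm{2}+\bm{2})$-free poset, $m_i$ is exactly the element of minimum value on level $i$, hence the one with the largest up-set and smallest down-set among elements of its level. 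Consequently, if any element of level $i$ is bad then $m_i$ is bad, so it suffices to produce one level whose rightmost element is bad.

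Say level $i$ is \emph{A-good} if $m_i$ is smaller than every element of level $i-1$ (with level $1$ A-good by convention), and \emph{B-good} if $m_i$ is larger than no element of level $i+1$ (with level $H+1$ B-good by convention). By Definition~\ref{badelement}, $m_i$ is bad exactly when level $i$ is both A-good and B-good, so the goal becomes: some level is both A-good and B-good. The key lemma I would prove is the implication that if level $i$ is \emph{not} B-good, then level $i+1$ is A-good. Indeed, if $m_i>w$ for some $w$ on level $i+1$, then since $m_i$ has minimum value on level $i$ we get $\mathrm{val}(u)\ge\mathrm{val}(m_i)>\mathrm{val}(w)+1$ for every $u$ on level $i$, so $w$ lies below all of level $i$; as $m_{i+1}$ has value at most $\mathrm{val}(w)$, it too lies below all of level $i$, which is precisely A-goodness of level $i+1$.

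With this lemma the proof finishes by a telescoping argument. Level $1$ is A-good; if it were also B-good we would be done, so assume it is not B-good, whence level $2$ is A-good by the lemma. Iterating, if no level were simultaneously A-good and B-good, then every level $1,2,\dots,H+1$ would be A-good but not B-good; in particular level $H+1$ would fail to be B-good, contradicting the convention that the last level is B-good. Hence some level $i$ is both A-good and B-good, and its rightmost element $m_i$ is the desired bad element. I expect the main obstacle to be the nestedness input: one must justify that on each level the rightmost element has the largest up-set and the smallest down-set, i.e.\ that principal up-sets and down-sets are linearly ordered by inclusion. This is where $(\bm{2}+\bm{2})$-freeness (equivalently, the real-line representation) is essential; once it is in hand, the level-by-level propagation and the telescoping contradiction are routine.
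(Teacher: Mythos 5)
Your proof is correct, and its skeleton is recognizably the paper's own argument run in the opposite direction: the paper also tracks rightmost elements and propagates a condition between adjacent levels, anchored at the two boundary levels. Concretely, the paper assumes no bad element exists, takes the rightmost element of the \emph{last} level (which automatically satisfies your B-condition), and walks upward: at each step, failure of badness forces a witness $c$ on the level above that is not larger than the current rightmost element, and an explicit $(\bm{2}+\bm{2})$-structure argument shows that $c$ is again rightmost and not larger than anything on the level below it; arriving at the first level produces a bad element, a contradiction. Your key lemma --- that failure of B-goodness at level $i$ implies A-goodness at level $i+1$ --- is essentially the contrapositive of that inductive step, and your top-down telescoping from the A-good first level to the B-good last level replaces the paper's bottom-up contradiction. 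The genuine difference is the tooling: the paper certifies the extremal behavior of rightmost elements by exhibiting forbidden $(\bm{2}+\bm{2})$-subposets directly, whereas you invoke the real-line representation quoted in Section~\ref{ch1}, which turns the nestedness of principal up-sets and down-sets, and hence every propagation step, into a one-line comparison of values. Your route is cleaner step by step but leans on the quoted (and in this paper unproved) representation theorem, while the paper's version stays self-contained in the forbidden-subposet framework. One small repair to your write-up: ``the rightmost element is the minimum-value element'' holds only up to ties among equivalent elements, so it is safest to define $m_i$ as any minimum-value element of level $i$ (such an element is then rightmost); with that convention your lemma and telescoping go through verbatim.
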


\begin{proof}
Assume to the contrary that there is no bad element for some
semiorder.  Say $a$ is the rightmost element on the last level based on Definition \ref{rightmost}.  Since $a$ is not bad, there must be
some element $b$ on the last but one level which is not larger than
$a$.  Let $\{a_1, a_2, \dots, a_s\}$ be the set of all elements on the last but one level which are larger than $a$.  By the definition of element levels, we must have $s \geq 1$.  

If there exists some element $b_1$ on the last level such that $b>b_1$, since $a$ is the rightmost, i.e., $a$ is to the right of $b_1$, there must exist some $i$, $1\leq i \leq s$, such that $a_i$ is not larger than $b_1$.  Then $\{a_i>a, b>b_1\}$ forms a $(\bm{2}+\bm{2})$-structure.  This is a contradiction.  Hence $b$ is not larger than any element on the last level, and thus $b$ is the rightmost element on the last but one level.

Since $b$ is not bad, it
must not be on the first level, and there must be some element $c$ on
the level immediately above $b$ that is not larger than $b$.  Again due to the fact that $b$ is the rightmost element on its level and that we cannot have a $(\bm{2}+\bm{2})$-structure, $c$ must be not larger
than any element on the level immediately below the level $c$ is on.
Continuing, we can find an element on each level that is not larger
than any element on the level immediately below it.  Since the length
of the semiorder is finite, we can finally obtain an element $d$ on
the first level such that $d$ is not larger than any element on the
second level.  Then $d$ is bad, and we get a contradiction.  Hence,
for every semiorder, bad element exists. 
\end{proof}

We are now ready to deduce the recurrence formula (\ref{5.1}).  For
fixed $k$, consider adjoining $k$ bad elements to an $(n-k)$-element
semiorder to get a new semiorder.  Specifically, for an
$(n-k)$-element semiorder of length at most $h$, and for $k$
nonadjacent levels among levels $1, 2, \dots, h+1$,  we consider
adjoining $k$ elements onto the given levels of the semiorder in the
following manner.  Say we are adjoining an element onto the $l^{\mathrm{th}}$
level. 
\begin{itemize}
\item If $l=1$, let the new element be larger than all elements on the
  $i^{\mathrm{th}}$ level, $i \geq 3$, and be not comparable with any other
  element.   
\item If $l\geq2$, and the $(l-1)^{\mathrm{th}}$ level originally has at least
  one element, we let the new element be smaller than all elements on
  the $(l-1)^{\mathrm{th}}$ level, be larger than all elements on the $i^{\mathrm{th}}$
  level, $i \geq l+2$, and be not comparable with any other element.   
\item If $l\geq2$, and the $(l-1)^{\mathrm{th}}$ level originally has no
  element, we \emph{hang} the new element on the $l^{\mathrm{th}}$
  level, that is, we place it as an isolated vertex on the
  $l^{\mathrm{th}}$ level. We then call
  the new semiorder an \emph{invalid} semiorder, and if some semiorder $r_0$
  can be obtained from the invalid semiorder by taking out the hanging
  elements, we call the invalid semiorder the \emph{disguise} of $r_0$. 
\end{itemize}
 
By Definition \ref{badelement}, in the above adjoining, all new
elements which are not hung are bad elements in the new semiorder.
There are $\binom{h+2-k}{k}$ ways to choose $k$ nonadjacent levels
among levels $1, 2, \dots, h+1$.  Therefore, including multiplicity
and the invalid ones, we can obtain $\binom{h+2-k}{k}\cdot f_{\leq
  h}^{n-k}$ $n$-element semiorders from $(n-k)$-element semiorders by
adjoining $k$ elements.  Let $\mathcal{S}^k$ be the set of all such
$n$-element semiorders, including multiplicity.  Then
$|\mathcal{S}^k|=\binom{h+2-k}{k}\cdot f_{\leq h}^{n-k}$. 

Let $\mathcal{R}$ be the set of all $n$-element semiorders of length
at most $h$, and $\mathcal{R'}$ be the set of all semiorders with at
most $n-1$ elements and length at most $h$. 
By Proposition \ref{5.3}, every semiorder has bad elements, so every
semiorder $r\in \mathcal{R}$ can be obtained by the above process from
some $(n-k)$-element semiorder of length at most $h$ and $k$ given
nonadjacent levels, i.e., $r\in\mathcal{S}^k$, for some $1 \leq k \leq
\lfloor \frac{h+2}{2} \rfloor$.  However, $r$ might be in
$\mathcal{S}^k$ for multiple $k$'s, and $r$ may have multiple copies
in $\mathcal{S}^k$.  Meanwhile, $\mathcal{S}^k$ may contain some
semiorders not in $\mathcal{R}$, but are the disguises of some
semiorders $r' \in \mathcal{R'}$.  Notice that $|\mathcal{R}|=f_{\leq
  h}^n$.  In the following argument, we calculate the number of copies
of a semiorder in each $\mathcal{S}^k$ and obtain a formula connecting
$|\mathcal{R}|$ and $|\mathcal{S}^k|$, $1 \leq k \leq  \lfloor
\frac{h+2}{2} \rfloor$. 

For a semiorder $r_0\in \mathcal{R} \cup \mathcal{R'}$, let
$\mathcal{S}_{r_0}^k$ be the set of all semiorders in $\mathcal{S}^k$
which are equal to $r_0$ or a disguise of $r_0$.  Then $\mathcal{S}^k
= \bigcup_{r \in \mathcal{R} \cup \mathcal{R'}} \mathcal{S}_r^k$, and  
\begin{equation}\label{SR}
\sum_{r \in \mathcal{R} \cup \mathcal{R'}} |\mathcal{S}_r^k| =|\mathcal{S}^k|=\binom{h+2-k}{k}\cdot f_{\leq h}^{n-k}.
\end{equation}

Next, we show that
$\sum_{k=1}^{\lfloor\frac{h+2}{2}\rfloor}(-1)^{k-1}|\mathcal{S}_r^k|=1$,
for every $r \in \mathcal{R}$, and
$\sum_{k=1}^{\lfloor\frac{h+2}{2}\rfloor}(-1)^{k-1}|\mathcal{S}_{r'}^{k}|=0$,
for every $r'\in\mathcal{R'}$.

1. For a semiorder $r\in \mathcal{R}$, assume $r$ has $m$ bad
elements.  Since we adjoined $k$ elements to an $(n-k)$-element
semiorder to obtain the semiorder $r$, which has $n$ elements, the $k$
new elements should all be added to the levels among the $m$ levels
where the bad elements are, and no new element is hung.  So $k \leq
m$.  Further notice that for a given $k$, $1\leq k \leq m$, and given
$k$ levels among the $m$ levels where the bad elements are, there is a
unique $(n-k)$-element semiorder can be used to adjoin $k$ bad
elements to the chosen levels to obtain semiorder $r$.  There are
$\binom{m}{k}$ ways to choose the $k$ levels, so
$|\mathcal{S}_r^k|=\binom{m}{k}\cdot 1$, and  
\begin{equation} \label{R}
\sum_{k=1}^{\lfloor\frac{h+2}{2}\rfloor}(-1)^{k-1}|\mathcal{S}_r^k|=\sum_{k=1}^{m}(-1)^{k-1}|\mathcal{S}_r^k|=\sum_{k=1}^{m}(-1)^{k-1}\binom{m}{k}\cdot1=1.
\end{equation}

2. For a semiorder $r'\in \mathcal{R'}$, assume $r'$ has $n'$
elements, $m'$ of which are bad.  Further assume that semiorder $r'$
has length $h'$.  For a given $k$, $1\leq k \leq
\lfloor\frac{h+2}{2}\rfloor$, if we adjoined $k$ elements to an
$(n-k)$-element semiorder to obtain $r'$, we need to adjoin
$t'=n'-n+k$ elements to levels where the bad elements of $r'$ are, and
hang the remaining $n-n'$ elements.  Moreover, since we hung $n-n'$
elements, there should be at least $n-n'$ nonadjacent levels among
levels $h'+2, h'+3, \dots, h+1$.  As a result, $\left \lceil
\frac{h-h'}{2} \right \rceil \geq n-n'$. 

To obtain the semiorder $r'$, if we are given $t'$ levels among the
$m'$ levels where the bad elements are and $n-n'$ nonadjacent levels
among levels $h'+2, h'+3, \dots, h+1$, there is a unique
$(n-k)$-element semiorder to which we can adjoin $k$ elements to the
chosen levels to obtain the disguise of $r'$.  Notice that there are
$\binom{m'}{t'}$ ways to choose $t'$ levels among the $m'$ levels, and
$\binom{h-h'+1-(n-n')}{n-n'}$ ways to choose $n-n'$ nonadjacent levels
from levels $h'+2, h'+3, \dots, h+1$.  Thus,  
$$|\mathcal{S}_{r'}^k|=|S_{r'}^{t'-n'+n}|=
\binom{m'}{t'}\cdot\binom{h-h'+1-(n-n')}{n-n'}\cdot
1.$$
 
Then
\begin{align}\label{Rr}
\sum_{k=1}^{\lfloor\frac{h+2}{2}\rfloor}(-1)^{k-1}|\mathcal{S}_{r'}^k|
&=\sum_{t'=0}^{m'}(-1)^{t'-n'+n-1}|\mathcal{S}_{r'}^{t'-n'+n}|\notag\\
&=\sum_{t'=0}^{m'}(-1)^{t'-n'+n-1}\binom{m'}{t'}\cdot
\binom{h-h'+1-(n-n')}{n-n'}\cdot 1\notag\\ 
&=(-1) ^{n-n'-1}\cdot \binom{h-h'+1-(n-n')}{n-n'}
\sum_{t'=0}^{m'}(-1)^{t'}\binom{m'}{t'}=0. 
\end{align}

However, we should be careful with the special case when $t' \geq 1$
and the $(h'+1)^{\mathrm{th}}$ level of $r'$ has bad elements.  When we choose
$t'$ levels among the $m'$ levels where bad elements are and $n-n'$
nonadjacent levels among levels $h'+2, h'+3, \dots, h+1$, it is
possible that the $(h'+1)^{\mathrm{th}}$ and $(h'+2)^{\mathrm{th}}$ levels are both
chosen.  This case should not occur when we directly choose $k$
nonadjacent levels among levels $1, 2, \dots, h+1$.  So we need to
take out the overcounts, and thus in this case,  
\begin{align*}
|\mathcal{S}_{r'}^k|&=|\mathcal{S}_{r'}^{t'-n'+n}|\\
&=\binom{m'}{t'}\cdot\binom{h-h'+1-(n-n')}{n-n'}\cdot 1
-\binom{m'-1}{t'-1}\cdot\binom{h-h'-1-(n-n'-1)}{n-n'-1}\cdot 1 
\end{align*}

By similar calculations, we have
$\sum_{k=1}^{\lfloor\frac{h+2}{2}\rfloor}(-1)^{k-1}|\mathcal{S}_{r'}^k|=0$.

To conclude the proof, by equations (\ref{SR}), (\ref{R}), and
(\ref{Rr}), we have 
\begin{align*}
f_{\leq h}^{n} = |\mathcal{R}| = \sum _{r\in \mathcal{R}} 1 +
\sum_{r'\in \mathcal{R'}} 0 
&= \sum_{r\in \mathcal{R}}
\sum_{k=1}^{\lfloor\frac{h+2}{2}\rfloor}(-1)^{k-1}|\mathcal{S}_r^k|+\sum_{r'\in
  \mathcal{R'}}
\sum_{k=1}^{\lfloor\frac{h+2}{2}\rfloor}(-1)^{k-1}|\mathcal{S}_{r'}^k|\\ 
&=\sum_{k=1}^{\lfloor\frac{h+2}{2}\rfloor}(-1)^{k-1}\sum_{r\in
  \mathcal{R}\cup \mathcal{R'}}
|\mathcal{S}_{r}^k|\\&=\sum_{k=1}^{\lfloor\frac{h+2}{2}\rfloor}(-1)^{k-1}\binom{h+2-k}{k}\cdot
f_{\leq h}^{n-k}. 
\end{align*}
\end{proof}

\section{The Number of Semiorders of Small Length}\label{sec6}
We can substitute certain lengths $H$ in the explicit formulas for the
number of semiorders.  Though the original formulas are very
complicated, we can get some simple results for small values of $H$.
In this section, we list these simple results and give bijective
proofs, which present a clearer view of the number of fixed-length
semiorders. 

\subsection{$f_{\leq 1}^n$, the number of nonisomorphic unlabeled
  $n$-element semiorders of length at most one} 
\begin{theorem}
For $n\geq 1$, $f_{\leq 1}^n=2^{n-1}$.  
\end{theorem}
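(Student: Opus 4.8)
The plan is to prove $f_{\leq 1}^n = 2^{n-1}$ by exhibiting a bijection between $n$-element semiorders of length at most one and a set of size $2^{n-1}$, the most natural candidate being the subsets of an $(n-1)$-element set, or equivalently binary strings of length $n-1$. Since a semiorder of length at most one has only two levels, its structure is entirely captured by its integer vector $\rho(R) = (r_1, \dots, r_n)$ from Section~\ref{ch1}, where each $r_i$ counts the elements below the $i^{\mathrm{th}}$ element. For a length-$\leq 1$ semiorder every element lies on level $1$ or level $2$, so each $r_i$ is simply the number of level-$2$ elements dominated by the $i^{\mathrm{th}}$ element; in particular all level-$2$ elements contribute $r_i = 0$. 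Thus the semiorder is determined by the weakly decreasing sequence of the nonzero $r_i$ values together with how many zeros pad it out.

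First I would set up the correspondence explicitly. Suppose the semiorder has $a$ elements on level $1$ and $b = n - a$ elements on level $2$. By Proposition~\ref{marker} at least one level-$1$ element dominates all of level~$2$, so the largest part equals $b$; the remaining level-$1$ elements give a weakly decreasing sequence $b = p_1 \geq p_2 \geq \dots \geq p_a \geq 0$ with parts in $\{0, 1, \dots, b\}$. This is exactly the data appearing in equation~(\ref{newsemiorder}) in the proof of Lemma~\ref{lemma2.7}. The key counting step is therefore to sum, over all valid $a$ and $b = n-a$, the number of such weakly decreasing sequences, which is the number of $(a-1)$-element multisets drawn from $\{0, 1, \dots, b\}$, namely $\binom{a-1+b}{a-1} = \binom{n-1}{a-1}$.

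The main computation then collapses to a single binomial identity: summing over the number $a$ of level-$1$ elements (with $a$ ranging appropriately so that both levels are nonempty when $n \geq 2$, plus the degenerate antichain case),
\begin{equation*}
f_{\leq 1}^n = \sum_{a} \binom{n-1}{a-1} = \sum_{j=0}^{n-1} \binom{n-1}{j} = 2^{n-1}.
\end{equation*}
I would take care at the boundary: the length-$0$ antichain (all $n$ elements incomparable) must be counted exactly once, and I expect it to appear as the $b = 0$ term, so the indexing of $a$ from $1$ to $n$ produces precisely the full binomial sum. An alternative and perhaps cleaner route is to invoke Theorem~\ref{main}: $f_{\leq 1}^n = \sum_{H \leq 1} f_H^n$ counts Dyck paths of semilength $n$ and height at most $2$, and such bounded-height paths are classically enumerated by $2^{n-1}$; but I would favor the direct multiset argument since it keeps the proof self-contained within the semiorder framework already developed.

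The step I expect to be the main obstacle is not the binomial summation, which is routine, but verifying that the map from binary data back to semiorders is well-defined and injective — specifically, confirming that distinct weakly decreasing sequences $(p_1, \dots, p_a)$ satisfying the constraints yield nonisomorphic semiorders, and that every length-$\leq 1$ semiorder arises exactly once. This hinges on the uniqueness of the vector representation $\rho(R)$ asserted in Section~\ref{ch1}, so the real work is to justify carefully that the level structure plus the sequence of dominated-counts is a complete isomorphism invariant in the two-level case, after which the count follows immediately.
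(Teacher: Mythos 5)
Your proof is correct, but it takes a genuinely different route from the paper's. The paper proves this theorem by exhibiting a single explicit bijection: element $a_1$ is placed on the upper level, each of $a_2,\dots,a_n$ is placed freely on the upper or lower level (which is where the $2^{n-1}$ comes from directly), and the order is defined by $a_i>a_j$ if and only if $i<j$ with $a_i$ upper and $a_j$ lower; the work then consists of checking that this poset is $(\bm{2}+\bm{2})$-free and that the inverse map is well defined via the vector $\rho$. You instead stratify the semiorders by the number $a$ of first-level elements, use Proposition~\ref{marker} to force the largest entry of $\rho$ to equal $b=n-a$, count the remaining entries as an $(a-1)$-element multiset from $\{0,1,\dots,b\}$ --- precisely the count in equation~(\ref{newsemiorder}) of Lemma~\ref{lemma2.7}, specialized to $h=1$ --- and finish with $\sum_{a=1}^{n}\binom{n-1}{a-1}=2^{n-1}$. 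In effect your argument is the $h=1$ instance of the paper's recurrence proof of Theorem~\ref{main} (with base case $f(n,0,m)=1$ exactly when $m=n$), whereas the paper's proof is an independent bijection designed to explain the power of $2$ without any summation: each of $n-1$ elements makes an independent binary choice. What your route buys is economy, since it reuses machinery already established ($\rho$-vector uniqueness, Proposition~\ref{marker}, the multiset count) and its only new ingredient is the binomial theorem; what it costs is that a binomial identity, rather than a structural correspondence, carries the final step. Your boundary handling is right (the term $a=n$, $b=0$ contributes exactly the antichain), and the one point you should make explicit rather than merely flag is the converse realizability: every weakly decreasing vector with largest part equal to $b$ and $a-1$ further parts in $\{0,\dots,b\}$ is actually realized by a semiorder of length at most one, which is immediate because a two-level poset with nested down-sets contains neither a $(\bm{2}+\bm{2})$ nor a $(\bm{3}+\bm{1})$.
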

We give a simple bijective proof here. 
\begin{proposition} 
For $n$ elements $a_1, a_2, \dots, a_n$, put $a_1$ on the upper level,
and each of $a_2, \dots, a_n$ either on the lower or upper level.
Define the order relations in the following way: 
$a_i >a_j$ if and only if $i<j$, and $a_i$ is on the upper level while
$a_j$ is on the lower level. 
\vspace{6pt}

We claim that the above defines a bijective map from (a) an
arrangement of $n-1$ elements onto two levels in (b) an $n$-element
semiorder of length at most one. 
\end{proposition}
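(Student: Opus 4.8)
The plan is to verify two things: first, that the prescribed order relations always yield a genuine semiorder of length at most one, and second, that the assignment (arrangement) $\mapsto$ (semiorder) is a bijection, which I would do by exhibiting an explicit inverse. Since there are exactly $2^{n-1}$ arrangements (each of $a_2,\dots,a_n$ is independently placed on one of the two levels, with $a_1$ fixed on the upper level), the bijection immediately yields $f_{\leq 1}^n = 2^{n-1}$, proving the theorem stated above.

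For well-definedness I would first observe that any relation $a_i > a_j$ forces $a_i$ to lie on the upper level and $a_j$ on the lower level. Consequently no element can simultaneously dominate one element and be dominated by another, so the poset contains no three-element chain; this shows at once that its length is at most one and that it is automatically $(\bm{3}+\bm{1})$-free. For $(\bm{2}+\bm{2})$-freeness I would argue by contradiction: given $a_i > a_j$ and $a_k > a_l$ with the four elements distinct, the two larger elements $a_i,a_k$ both lie on the upper level, and taking $i<k$ without loss of generality the indices satisfy $i<k<l$, whence $a_i > a_l$. This extra comparability prevents $\{a_i,a_j,a_k,a_l\}$ from inducing a $(\bm{2}+\bm{2})$, and the symmetric subcase $k<i$ is handled the same way. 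Thus every arrangement maps to a length-$\leq 1$ semiorder.

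To prove bijectivity I would construct the inverse directly from the canonical vector $\rho(S)=(r_1,\dots,r_n)$ attached to a length-$\leq 1$ semiorder $S$ in Section~\ref{ch1}. The upper level of $S$ consists of its maximal elements and the lower level of the rest; writing $p=r_1$, I would check that $p$ equals the number of lower elements (the most dominant upper element lies above all of them) and that $q:=n-p$ equals the number of upper elements, whose numbers of elements below them are exactly $r_1\geq r_2\geq\dots\geq r_q$. I would then recover the arrangement as the unique left-to-right word that begins with an upper element and places, immediately after the $t^{\mathrm{th}}$ upper element, exactly $r_t-r_{t+1}$ lower elements, with $r_q$ lower elements after the last upper element. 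Verifying that this word, fed back through the construction, reproduces $\rho(S)$, and that applying the construction to a given arrangement and then reading off this word returns the original arrangement, shows that the two maps are mutually inverse.

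The crux of the argument is the reconstruction step. The partition of elements into two levels is easy to read off from $S$, but recovering the full left-to-right \emph{order} of the elements, not merely their level assignment, is what genuinely uses that $S$ is a semiorder. The $(\bm{2}+\bm{2})$-free condition forces the down-sets of the upper elements to be nested, and it is precisely this nesting that guarantees the elements can be linearly ordered so that each upper element dominates exactly the lower elements to its right, and that such an ordering is unique. I expect this passage from nesting to a unique linear order to be the main obstacle; the remaining bookkeeping, namely matching the numbers of elements below each upper element with the entries $r_t$ and counting the arrangements, is routine.
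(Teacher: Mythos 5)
Your proposal is correct and takes essentially the same route as the paper: well-definedness is established by ruling out an induced $(\bm{2}+\bm{2})$ (where the paper derives the index contradiction $k>j>i>m>k$, you exhibit the forced extra relation $a_i>a_l$ for $i<k$), and bijectivity is established by reconstructing the unique level assignment from the canonical vector $\rho$. Your reconstruction rule, placing $r_t-r_{t+1}$ lower elements after the $t^{\mathrm{th}}$ upper element, is equivalent to the paper's statement that the upper-level elements are exactly $a_1, a_{r_1-r_2+2}, \dots, a_{r_1-r_m+m}$, with your explicit nesting-of-down-sets observation filling in a step the paper leaves implicit.
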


Here is an example of the map.  Say $n=10$, and for $a_2, \dots,
a_{10}$, let $\{a_2, a_5, a_9\}$ be on the upper level, and $\{a_3,
a_4, a_6, a_7, a_8, a_{10}\}$ on the lower level.  Then the
corresponding semiorder looks like: 

$$ \xy 0;/r.45pc/: 
(0,0)*{\bullet}="a";
(5,0)*{\bullet}="b";
(10,0)*{\bullet}="c";
(15,0)*{\bullet}="d";
(0,-10)*{\bullet}="f";
(5,-10)*{\bullet}="g";
(10,-10)*{\bullet}="h";
(15,-10)*{\bullet}="i";
(20,-10)*{\bullet}="j";
(25,-10)*{\bullet}="k";
(-1,1)*{a_1};
(4,1)*{a_2};
(9,1)*{a_5};
(14,1)*{a_9};
(-1,-11)*{a_3};
(4,-11)*{a_4};
(9,-11)*{a_6};
(14,-11)*{a_7};
(19,-11)*{a_{8}};
(24,-11)*{a_{10}};
"a";"f"**\dir{-};
"a";"g"**\dir{-};
"a";"h"**\dir{-};
"a";"i"**\dir{-};
"a";"j"**\dir{-};
"a";"k"**\dir{-};
"b";"f"**\dir{-};
"b";"g"**\dir{-};
"b";"h"**\dir{-};
"b";"i"**\dir{-};
"b";"j"**\dir{-};
"b";"k"**\dir{-};
"c";"h"**\dir{-};
"c";"i"**\dir{-};
"c";"j"**\dir{-};
"c";"k"**\dir{-};
"d";"k"**\dir{-};
\endxy$$

\begin{proof}
We first show that the map gives a semiorder of length at most one.
It suffices to show that the poset the map gives is indeed a
semiorder.  Then the only possible violation is a
$(\bm{2}+\bm{2})$-structure.  If there exist four distinct elements
$a_i$, $a_j$, $a_k$, $a_m$, such that $a_i>a_j$, $a_k>a_m$, $a_i\sim
a_k$, $a_i\sim a_m$, $a_k\sim a_j$, $a_m\sim a_j$, then $a_i, a_k$
must be on the upper level, while $a_j, a_m$ must be on the lower
level, and $i<j$, $k<m$.  Since $a_i\sim a_m$, we must have $i>m$;
since $a_k\sim a_j$, we must have $k>j$.  Then $k>j>i>m>k$, which is a
contradiction.  Hence the map gives a semiorder of length at most
one. 
  
We then claim that the inverse map is also well-defined, and thus the
map is bijective.  For a given $n$-element semiorder $r$ with $m$
elements on the upper level, let $\rho(r)=(r_1, r_2, \dots, r_n)$.
Then $r_{m+1} = r_{m+2} = \dots = r_n=0$.  Say element $a_{t_i}$
corresponds to $r_i$, $1 \leq i\leq m$, and then there should be
exactly $r_i$ elements on the lower level such that their subscripts
are larger than $t_i$.  As a result, note that $a_1$ is on the upper
level, we should also have $a_{r_1-r_2+2}, a_{r_1-r_3+3}, \dots,
a_{r_1-r_m+m}$ on the upper level.  In other words, for a given
semiorder of length at most one, the elements arranged on the upper
level are uniquely determined.  Therefore, the inverse map is
well-defined. 

\end{proof}

As an example, if we have $(r_1, r_2, \dots, r_n)=(6, 6, 4, 1, 0, 0,
0, 0, 0, 0)$, then the elements on the upper level must be $a_1, a_2,
a_5, a_9$.

There are $2^{n-1}$ ways to arrange elements $a_2, \dots, a_n$ on
either upper or lower level, and thus there are $2^{n-1}$
nonisomorphic unlabeled $n$-element semiorders of length at most one.

\subsection{The number of nonisomorphic trees derived from semiorders of length at most one}\label{tree}

In this subsection, we take a closer look at the unlabeled semiorders
of length at most one. For an $n$-element semiorder $S$ of length at
most one, and exactly $m$ elements on the first level, let
$\rho(S)=(r_1, r_2, \dots, r_m, 0, 0, \dots, 0)$, where there are
$n-m$ $0$'s and $n-m\geq r_1 \geq r_2 \geq \dots \geq r_m \geq 0$.
Let the elements of the semiorder be $s_1, s_2, \dots, s_n$, with
$s_i$ corresponding to $r_i$, $1\leq i \leq m$, and then $s_1, s_2,
\dots, s_m$ are on the upper level. 

For a permutation $\sigma=(a_1, a_2, \dots, a_m)$ of $\{1, 2, \dots,
m\}$, if we add the relations $s_{a_1}>s_{a_2}>\dots>s_{a_m}$ to the
original semiorder, we get a tree with the main trunk
$s_{a_1}>s_{a_2}>\dots>s_{a_m}$, and the elements $s_{m+1}, s_{m+2},
\dots, s_n$ attached to one of the elements on the main trunk in the
following manner.  For $m+1\leq i \leq n$, element $s_i$ is attached
to element $s_j$ on the main trunk if and only if $s_i < s_j$, and
$s_i$ is incomparable with all elements on the main trunk that are
below $s_j$, $1\leq j \leq m$.  We denote the tree derived from
semiorder $S$ and permutation $\sigma$ by $T(S, \sigma)$. 

For example, the Hasse diagram of the semiorder $S$ with $\rho(S)=(7,
5, 4, 2, 1, 0, 0, 0, 0, 0, 0)$ is as follows: 
$$ \xy 0;/r.45pc/: 
(0,0)*{\bullet}="a";
(5,0)*{\bullet}="b";
(10,0)*{\bullet}="c";
(15,0)*{\bullet}="d";
(20,0)*{\bullet}="e";
(0,-10)*{\bullet}="f";
(5,-10)*{\bullet}="g";
(10,-10)*{\bullet}="h";
(15,-10)*{\bullet}="i";
(20,-10)*{\bullet}="j";
(25,-10)*{\bullet}="k";
(30,-10)*{\bullet}="l";
(-1,1)*{s_1};
(4,1)*{s_2};
(9,1)*{s_3};
(14,1)*{s_4};
(19,1)*{s_5};
(-1,-11)*{s_6};
(4,-11)*{s_7};
(9,-11)*{s_8};
(14,-11)*{s_9};
(19,-11)*{s_{10}};
(24,-11)*{s_{11}};
(29,-11)*{s_{12}};
"a";"f"**\dir{-};
"a";"g"**\dir{-};
"a";"h"**\dir{-};
"a";"i"**\dir{-};
"a";"j"**\dir{-};
"a";"k"**\dir{-};
"a";"l"**\dir{-};
"b";"h"**\dir{-};
"b";"i"**\dir{-};
"b";"j"**\dir{-};
"b";"k"**\dir{-};
"b";"l"**\dir{-};
"c";"i"**\dir{-};
"c";"j"**\dir{-};
"c";"k"**\dir{-};
"c";"l"**\dir{-};
"d";"k"**\dir{-};
"d";"l"**\dir{-};
"e";"l"**\dir{-};
\endxy$$

\vspace{6pt}

Suppose that $\sigma=(1, 5, 3, 2, 4)$, and then we add the relations
$s_1>s_5>s_3>s_2>s_4$ to the original semiorder.  Then $T(S,\sigma)$
is 
$$ \xy 0;/r.35pc/: 
(0,0)*{\bullet}="a";
(0,-21)*{\bullet}="b";
(0,-14)*{\bullet}="c";
(0,-28)*{\bullet}="d";
(0,-7)*{\bullet}="e";
(7,-3)*{\bullet}="f";
(5,-5)*{\bullet}="g";
(7,-24)*{\bullet}="h";
(6,-25)*{\bullet}="i";
(5,-26)*{\bullet}="j";
(7,-31)*{\bullet}="k";
(5,-33)*{\bullet}="l";
(-2,1)*{s_1};
(-2,-20)*{s_2};
(-2,-13)*{s_3};
(-2,-27)*{s_4};
(-2,-6)*{s_5};
(9,-4)*{s_6};
(7,-6)*{s_7};
(9,-24)*{s_8};
(8,-26)*{s_9};
(6,-27.5)*{s_{10}};
(9,-32)*{s_{11}};
(7,-34)*{s_{12}};
"a";"e"**\dir{-};
"a";"f"**\dir{-};
"a";"g"**\dir{-};
"e";"c"**\dir{-};
"c";"b"**\dir{-};
"b";"d"**\dir{-};
"b";"h"**\dir{-};
"b";"i"**\dir{-};
"b";"j"**\dir{-};
"d";"k"**\dir{-};
"d";"l"**\dir{-};
\endxy$$

Here we call $s_1>s_5>s_3>s_2>s_4$ the main trunk, and say elements
$s_6, s_7$ are attached to $s_1$, elements $s_8, s_9, s_{10}$ are
attached to $s_2$, and elements $s_{11}, s_{12}$ are attached to
$s_4$.   

The idea of transforming a semiorder of length at most one to a tree
is suggested by R. Stanley, in the context of finding the number of
linear extensions of $n$-element semiorders of length at most one.
Though this idea may not be useful in its original context, we can
give a different application. 

\begin{theorem}\label{catalan}
Given an $n$-element semiorder $R_{m}$ of length at most one and
exactly $m$ elements on the first level, let $\rho(R_{m})=(r_1, r_2,
\dots, r_n)$.  If $r_i \neq r_j$ for any $1 \leq i < j \leq m$,  then
the number of nonisomorphic unlabeled trees in $\{T(R_{m}, \sigma) |
\sigma \in S_m\}$ is the Catalan number $C_m$. 
\end{theorem}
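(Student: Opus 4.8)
The plan is to show that, although there are $m!$ choices of $\sigma$, the isomorphism type of $T(R_{m},\sigma)$ depends on $\sigma$ only through a single combinatorial invariant, and that as $\sigma$ ranges over $S_m$ this invariant takes exactly $C_m$ distinct values. Throughout write $[m]=\{1,\dots,m\}$.

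First I would describe the trees explicitly. Since $R_{m}$ is $(\bm{2}+\bm{2})$-free, the sets of elements lying below the first-level elements are totally ordered by inclusion; as $r_1>r_2>\dots>r_m$, writing $D(s_j)$ for the set of lower elements below $s_j$ we get $D(s_1)\supsetneq D(s_2)\supsetneq\dots\supsetneq D(s_m)$. Hence every lower element $x$ lies below exactly $s_1,\dots,s_{c(x)}$ for a well-defined \emph{band} index $c(x)\in[m]$, and band $j$ contains $w_j$ elements, where $w_j=r_j-r_{j+1}\ge 1$ for $1\le j\le m-1$ and $w_m=r_m\ge 0$. The hypothesis $r_i\neq r_j$ is exactly what forces the interior bands to be nonempty. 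In $T(R_{m},\sigma)$ the trunk is the chain $s_{\sigma(1)}>\dots>s_{\sigma(m)}$, and unwinding the attachment rule shows that a band-$j$ element is glued as a leaf to the trunk node in position $f_\sigma(j):=\max\{t:\sigma(t)\le j\}$. Thus $T(R_{m},\sigma)$ is the caterpillar whose $t$-th trunk node carries $\ell_t=\sum_{j:\,f_\sigma(j)=t}w_j$ leaves, and so the tree depends on $\sigma$ only through the function $f_\sigma$.

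Next I would analyze $f_\sigma$. Writing $\pi=\sigma^{-1}$, one has $f_\sigma(j)=\max\{\pi(1),\dots,\pi(j)\}$, the running maximum of $\pi$; hence $f_\sigma$ is non-decreasing with $f_\sigma(j)\ge j$ and $f_\sigma(m)=m$. Conversely every non-decreasing $f\colon[m]\to[m]$ with $f(j)\ge j$ is the running-maximum sequence of some permutation: the record positions are forced, and one fills the remaining positions greedily with unused smaller values, the inequality $f(j)\ge j$ guaranteeing that one is always available. So these functions are exactly the achievable ones, and a short computation (or a standard lattice-path bijection) shows that their number is the Catalan number $C_m$. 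It therefore remains only to prove that distinct attachment functions produce non-isomorphic trees.

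This injectivity is the main obstacle, and it is where the bottom of the trunk must be handled with care. When $r_m\ge 1$ all bands are nonempty, so $\ell_m\ge w_m\ge 1$, and the trunk is recovered from the rooted tree as the path obtained by repeatedly passing to the unique non-leaf child; reading off the leaf counts $(\ell_1,\dots,\ell_m)$ and decomposing them into consecutive runs of the band sizes $w_j$ (unique since each interior $w_j\ge 1$ makes the prefix sums strictly increasing) reconstructs $f_\sigma$. When $r_m=0$ the bottom trunk node may carry no leaf and thus be indistinguishable from an ordinary leaf; I would pass to the \emph{canonical caterpillar form}, collapsing a trailing $\ell_m=0$ by incrementing the previous leaf count and shortening the trunk by one. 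One checks that this collapse happens at most once and that the resulting normal form is a complete isomorphism invariant, with the canonical trunk length ($m$ versus $m-1$) separating the cases $f_\sigma(m-1)=m$ and $f_\sigma(m-1)=m-1$; within each case the same greedy decomposition recovers $f_\sigma$. Combining the count of achievable functions with this injectivity yields exactly $C_m$ non-isomorphic trees.
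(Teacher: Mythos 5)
Your proposal is correct, and it takes a genuinely different route through the same overall strategy (isolate a complete isomorphism invariant of $T(R_{m},\sigma)$, then count the invariant's possible values). The paper's invariant is the set $RtLM(\sigma)$ of positions and values of the right-to-left minima of $\sigma$: only those trunk nodes can carry leaves, the leaf counts are the differences $r_{b_i}-r_{b_{i+1}}$ (distinctness of the $r_i$ giving injectivity), and the bulk of the paper's proof is Lemma~\ref{Narayana}, a bijection between achievable sets $RtLM(\sigma)$ with $k$ minima and Dyck paths of semilength $m$ with $k$ peaks, so the count is $\sum_{k}N(m,k)=C_m$. Your invariant, the running maximum $f_\sigma$ of $\sigma^{-1}$, encodes the same data (right-to-left minima of $\sigma$ correspond to the records of $\sigma^{-1}$, which determine and are determined by the running-maximum function), but you count its values directly: the achievable $f$ are exactly the non-decreasing functions $[m]\to[m]$ with $f(j)\ge j$, a standard Catalan family, so the Narayana refinement and the peak bijection are bypassed entirely. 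The trade-off: the paper's longer route yields the refined Narayana statistic that it advertises in a closing remark, while your route gives a shorter count and a noticeably more careful injectivity step. In particular, your canonical caterpillar form honestly treats the degenerate case $r_m=0$ in which the bottom trunk node carries no leaves (equivalently $b_k=m$ in the paper's notation), where the trunk of the abstract tree is genuinely ambiguous; the paper's one-sentence injectivity claim passes over this case silently, so your normal-form argument (the collapse happens at most once, and the canonical trunk length separates collapsed from uncollapsed trees) actually patches a small gap in the published proof. The remaining ingredients of your argument are sound: the greedy construction showing every monotone $f\ge\mathrm{id}$ is a running maximum works because $f(j)\ge j$ leaves an unused value available at every non-record position, and the prefix-sum reconstruction of $f_\sigma$ from the leaf counts uses exactly the hypothesis $r_i\neq r_j$ to make the interior band sizes positive.
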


\begin{proof}
A permutation $\sigma=(a_1, a_2, \dots, a_m)$ of $\{1, 2, \dots, m\}$
uniquely determines the main trunk.  For $m+1\leq i\leq n$, assume
element $s_i$ is smaller than $t_i$ elements.  Then $s_i$ must be
smaller than $s_1, s_2, \dots, s_{t_i}$ and is not comparable with the
other elements.  Let $s_{u_i}$ be the lowest element on the main trunk
among $s_1, s_2, \dots, s_{t_i}$.  Then $s_i$ is attached to $s_{u_i}$
as a leaf, meaning $s_i$ is smaller than $s_{u_i}$ but not comparable
with any element  on the main trunk below $s_{u_i}$.   

As a result, for an element $s_{u}$ on the main trunk, $s_u$ has
leaves only if it is the lowest element on the main trunk among $s_1,
s_2, \dots, s_t$, for some $1\leq t \leq m$.  In other words, assume
$b_1 < b_2< \dots < b_k$ are the set of right-to-left minima of the
permutation $\sigma=(a_1, a_2, \dots, a_m)$, and then only the
elements $s_{b_1},  s_{b_2}, \dots,  s_{b_k}$ may have leaves.
Further notice that the numbers of leaves attached to $s_{b_1},
s_{b_2}, \dots,  s_{b_k}$ are $r_{b_1}-r_{b_2}, r_{b_2}-r_{b_3},
\dots, r_{b_{k-1}}-r_{b_k}, r_{b_k}$, respectively, and $r_i \neq r_j$
for any $1 \leq i < j \leq m$.  Therefore, for a given $n$-element
semiorder $R_{m}$ of length at most one and exactly $m$ elements on
the first level, the value and position of the right-to-left minima of
the permutation $\sigma=(a_1, a_2, \dots, a_m)$ uniquely determines
$T(R_m,\sigma)$.   

For instance, in the example above, we have $\rho(R_5)=(r_1, r_2,
\dots, r_{12})=(7, 5, 4, 2, 1, 0, 0, 0, 0, 0, 0, 0)$ and $\sigma=(1,
5, 3, 2, 4)$.  The right-to-left-minima of $\sigma$ and their
positions with $\sigma$ is given by $1, \ast, \ast, 2, 4$.  Then the
corresponding tree $T(R_5, \sigma)$ has five nodes on the main trunk,
$r_1-r_2=2$ leaves attached to the first node, $r_2-r_4=3$ leaves
attached to the forth node, and $r_4=2$ leaves attached to the fifth
node. 

Therefore, the number of all possible nonisomorphic unlabeled trees in
$\{T(R_{m}, \sigma) | \sigma \in S_m\}$ is the number of ways to
specify the values and positions of the right-to-left minima of
permutations $\sigma \in S_m$. That is, if we let
$RtLM(\sigma)=\{(a,\sigma(a)) | 1\leq a \leq m, \sigma(a) \text{ is a
  right-to-left minima in }\sigma\}$, then $\#\{T(R_{m}, \sigma) |
\sigma \in S_m\}=\#\{RtLM(\sigma) | \sigma \in S_m\}$.  We calculate
$\#\{RtLM(\sigma) | \sigma \in S_m\}$ in the following lemma. 

\begin{lemma}\label{Narayana}
Let $RL(\sigma)$ be the number of right-to-left minima of the
permutation $\sigma$.  For $1\leq k \leq m$, let
$f(m,k)=\#\{RtLM(\sigma) | \sigma \in S_m, RL(\sigma)=k\}$.  Then
$f(m,k)=N(m,k)=\frac{1}{m}\binom{m}{k}\binom{m}{k-1}$, a Narayana
number. 
\end{lemma}

For example, for $m=3$ and $k=2$, $f(3,2)=\#\{RtLM(\sigma) | \sigma
\in S_3, RL(\sigma)=2\}=\#\{RtLM(\sigma) | \sigma=(23), (12), \text{or
}(132)\}=\#\{\{(3,2), (1,1)\}, \{(3,3), (2,1)\}, \{(3,2),
(2,1)\}\}=3$. 

\begin{proof}
For $1 \leq k \leq m$, the Narayana number $N(m,k)$ is equal to the
number of Dyck paths of semilength $m$ with $k$ peaks, which are the
turning points from a $(1,1)$ step to a $(1,-1)$ step on the path.  We
prove the lemma by establishing a bijection between (i) Dyck paths of
semilength $m$ with $k$ peaks, and (ii) the collection of different
$RtLM(\sigma)$'s for $\sigma \in S_m, RL(\sigma)=k$.  We define a map
from (i) to (ii) as follows: 
\vspace{6pt}

Given a Dyck path of semilength $m$ with $k$ peaks, let us read the
Dyck path from left to right and do the following: 
\begin{itemize}
\item Label the endpoints of $(1,1)$ steps from left to right with $1$
  to $m$. Since there are $m$ $(1,1)$ steps, there should be $m$ such
  endpoints.  
\item Label the startpoints of $(1,-1)$ steps from left to right with
  $1$ to $m$. Since there are $m$ $(1,-1)$ steps, there should be $m$
  such startpoints.  
\item Notice that a point on the Dyck path is a peak if and only if it
  is both an endpoint of a $(1,1)$ step and a startpoint of a $(1,-1)$
  step.  Let $(i,j)$ be the coordinate of a peak, if the peak is the
  $i^{\mathrm{th}}$ endpoint and the $j^{\mathrm{th}}$ startpoint.  Assume the
  coordinate of the $k$ peaks are $(a_1, b_1), (a_2, b_2), \dots,
  (a_k, b_k)$. Then $a_1<a_2<\dots<a_k=m$ and  $1=b_1<b_2<\dots<b_k$. 
\item Obtain a specification of the values and positions of the
  right-to-left minima of a permutation by putting the number $b_i$ on
  position $a_i$, $1\leq i \leq k$. 
\end{itemize}

\begin{proposition}\label{b2}
The above map is valid, i.e., $\{(a_i, b_i), 1\leq i \leq
k\}=RtLM(\sigma)$, for some $\sigma \in S_m, RL(\sigma)=k$, and the
above map is a bijection. 
\end{proposition}

For example, if we have the following Dyck path:
$$ \xy 0;/r.40pc/: 
(0,0)*{\bullet}="a";
(5,5)*{\bullet}="b";
(10,10)*{\bullet}="c";
(15,15)*{\bullet}="d";
(20,10)*{\bullet}="e";
(25,5)*{\bullet}="f";
(30,10)*{\bullet}="g";
(35,5)*{\bullet}="h";
(40,10)*{\bullet}="i";
(45,15)*{\bullet}="j";
(50,10)*{\bullet}="k";
(55,5)*{\bullet}="l";
(60,0)*{\bullet}="m";
(65,5)*{\bullet}="n";
(70,0)*{\bullet}="o";
(4,6)*{\bf{1}};
(9,11)*{\bf{2}};
(14,16)*{\bf{3}};
(29,11)*{\bf{4}};
(39,11)*{\bf{5}};
(44,16)*{\bf{6}};
(64,6)*{\bf{7}};
(16,16)*{1};
(21,11)*{2};
(31,11)*{3};
(46,16)*{4};
(51,11)*{5};
(56,6)*{6};
(66,6)*{7};
"a";"b"**\dir{-};
"b";"c"**\dir{-};
"c";"d"**\dir{-};
"d";"e"**\dir{-};
"e";"f"**\dir{-};
"f";"g"**\dir{-};
"g";"h"**\dir{-};
"h";"i"**\dir{-};
"i";"j"**\dir{-};
"j";"k"**\dir{-};
"k";"l"**\dir{-};
"l";"m"**\dir{-};
"m";"n"**\dir{-};
"n";"o"**\dir{-};
\endxy$$

The semilength of the Dyck path is $m=7$, and it has $4$ peaks.  The
numbers in bold face are the labels for the endpoints of $(1, 1)$
steps, and the numbers in ordinary type are the labels for the
startpoints of $(-1, 1)$ steps.  Then the coordinates of the 4 peaks
are $(3, 1), (4, 3), (6, 4), (7,7)$. We put 1 on position 3, 3 on
position 4, 4 on position 6, 7 on position 7, and then we get a
possible specification of the values and positions of the
right-to-left minima of a permutation: 
\begin{equation}\label{permutation}
\ast, \ast, 1, 3, \ast, 4, 7.
\end{equation}

\begin{proof}
\begin{itemize}
\item We first show that the above map gives us a valid specification
  of the values and positions of the right-to-left minima of some
  permutation $\sigma \in S_m$.  We prove the validity by constructing
  such $\sigma$. 

Assume the labels of the $m-k$ endpoints of the $(1,1)$ steps which
are not peaks, are $c_1< c_2 < \dots < c_{m-k}$; assume the labels of
the $m-k$ startpoints of the $(1,-1)$ steps which are not peaks are
$d_1< d_2 < \dots < d_{m-k}$.  Then 
$$\{a_1, a_2, \dots, a_k, c_1, c_2,
\dots, c_{m-k}\}=\{b_1, b_2, \dots, b_k, d_1, d_2, \dots,
d_{m-k}\}=\{1, 2, \dots, m\}. $$
Since the path never goes below the
$x$-axis, we must have $c_j < d_j$ for every $1 \leq j \leq m-k$.  

Let $\sigma$ be a permutation such that $\sigma(a_i)=b_i$, $1 \leq i
\leq k$, and $\sigma(c_j)=d_j$, $1 \leq j \leq m-k$.  We will show
that $RtLM(\sigma)=\{(a_i, b_i), 1\leq i \leq k\}$. 

For every $j$, $1 \leq j \leq m-k$, since the Dyck path never goes
below the $x$ axis, there must be a peak $i$ on the path between $c_j$
and $d_j$.  Then $c_j<a_i$ and $b_i<d_j$, and thus in $\sigma$,
$b_i=\sigma(a_i)$ is smaller than $d_j=\sigma(c_j)$, while $a_i>c_j$,
i.e., $b_i$ is to the right of $d_j$.  Therefore $d_j$ cannot be a
right-to-left minimum. 

On the other hand, for every $i$, $1 \leq i \leq k$, there does not
exist some $1\leq i'\leq k$ such that $b_{i'}<b_i$ and $a_{i'}>a_i$.
In addition, if there exists some $1\leq j \leq m-k$ such that
$d_{j}=\sigma(c_j)<b_i=\sigma(a_i)$ and $c_{j}<a_i$, then by the above
paragraph there exists $1\leq i'\leq k$, such that $b_{i'}<d_j<b_i$
and $a_{i'}>c_j>a_i$.  We obtain a contradiction.  As a result,
$RtLM(\sigma)=\{(a_i, b_i), 1\leq i \leq k\}$. 

For example, let us construct the permutation $\sigma$ for the above
example: $c_1=1$, $c_2=2$, $c_3=5$ and $d_1=2$, $d_2=5$, $d_3=6$.  We
obtain $\sigma=(2, 5, 1, 3, 6, 4, 7)$, and this permutation exactly
corresponds to the right-to-left minima as shown in Example
\ref{permutation}. 

\item We now show that the inverse of the map is well-defined, and
  thus the map is bijective.   

For a given specification $\{(a_i, b_i), 1\leq i \leq
k\}=RtLM(\sigma)$, for some $\sigma \in S_m$, we have
$a_1<a_2<\dots<a_k=m$ and $1=b_1<b_2<\dots<b_k$.  We construct the
corresponding Dyck path as follows:  when we walk along the path from
left to right, we first walk up $a_1$ steps and then turn down, and
walk down $b_2 -b_1$ steps and then turn up.  We continue to walk up
$a_2-a_1$ steps and then turn down, and walk down $b_3 -b_2$ steps and
then turn up.  In general, we walk up $a_i-a_{i-1}$ steps and then
turn down, and walk down $b_{i+1}-b_{i}$ steps, $2\leq i \leq m-1$.
In the end, we walk up $a_m-a_{m-1}$ steps and walk down $m+1-b_m$
steps.  During the walk, we walk up in total
$a_1+(a_2-a_1)+\dots+a_m-a_{m-1}=a_m=m$ steps, and walk down in total
$(b_2 -b_1)+\dots+(b_m-b_{m-1})+m+1-b_m = m+1-b_1=m$ steps, and we
make $k$ turns from up to down.   Since $\{(a_i, b_i), 1\leq i \leq
k\}$ is a collection of right-to-left minima of some permutation, for
any $1\leq i \leq k-1$, $a_i\leq a_{i+1}-1 \leq
b_{i+1}-1=b_{i+1}-b_1$, so we never walk below the $x$-axis on the
path. Therefore we get a unique Dyck path with semilength $m$ and $k$
peaks.  Hence the inverse map is well-defined, so Lemma \ref{Narayana}
is proved. 
\end{itemize}
\end{proof}
\end{proof}

To conclude the proof of Theorem \ref{catalan}, note
that the Catalan number $C_m = \sum_{k=1}^{m}N(m,k)$.  Hence, we have that the number of $\text{ nonisomorphic unlabeled trees in
    $\{T(R_{m}, \sigma) | \sigma \in S_m\}$}$ is equal to
$\#\{RtLM(\sigma) | \sigma \in S_m\}=\sum_{k=1}^{m}f(m,k)=\sum_{k=1}^{m}N(m,k)=C_m=\frac{1}{m+1}\binom{2m}{m}$.
\end{proof}

\begin{rmk}
Theorem \ref{catalan}, along with Lemma \ref{Narayana}, gives
another combinatorial explanation of the Catalan number. 
\end{rmk}

\subsection{The generating function for the number of nonisomorphic
  labeled $n$-element semiorders of length at most one} 

Recall that an ordered partition of a set is a partition of the set
into some pairwise disjoint nonempty subsets, together with a linear
ordering of these subsets.  From the generating function (\ref{3.8})
for $G_{\leq h}(x)$, we get $G_{\leq
  1}(x)=(1-e^{-x})/(2e^{-x}-1)=(e^{x}-1)/(2-e^{x})$, which is
exactly the exponential generating function for the number of ordered
partitions \cite[p.472]{Stanley}.  As a result, we can get the
following theorem: 

\vspace{6pt}

\begin{theorem}\label{6.3}
The number of nonisomorphic labeled $n$-element semiorders of length
at most one is equal to the number of ordered partitions of $[n]$. 
\end{theorem}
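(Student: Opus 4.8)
The plan is to establish a bijection between nonisomorphic labeled $n$-element semiorders of length at most one and ordered set partitions of $[n]$, thereby proving the two quantities are equal directly rather than merely comparing generating functions. Although the theorem as stated follows immediately from the observation that $G_{\leq 1}(x) = (e^x-1)/(2-e^x)$ coincides with the known exponential generating function for ordered partitions, a combinatorial bijection is more in the spirit of this section and gives a clearer explanation.

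First I would recall the structure of a length-at-most-one semiorder. By Definition~\ref{2.3}, every element lies on either the first (upper) level or the second (lower) level, and by the representation $\rho(S) = (r_1, \dots, r_n)$ each upper-level element is larger than some down-set of lower-level elements. The key structural fact, which follows from Proposition~\ref{2} being vacuous here and from the $(\bm{2}+\bm{2})$-free condition, is that the upper-level elements are \emph{linearly ordered by their down-sets}: if $u$ and $u'$ are on the upper level, then the set of lower-level elements below $u$ and the set below $u'$ are nested (one contains the other). Thus a labeled semiorder of length at most one is determined by a chain of nested subsets of the lower-level labels together with the assignment of labels.

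Next I would set up the bijection explicitly. Given a labeled semiorder, group the elements according to the relation of equivalence from Definition~\ref{5.5}: two elements are equivalent precisely when they are comparable to exactly the same elements. The equivalence classes then inherit a natural linear order from the nesting of down-sets described above, and I would verify that this linearly ordered collection of nonempty classes is exactly an ordered partition of $[n]$. Conversely, given an ordered partition $(B_1, B_2, \dots, B_k)$ of $[n]$, I would reconstruct the semiorder by declaring the elements of $B_i$ to be larger than the elements of $B_j$ for all $j > i+1$ in a way that produces a length-$\leq 1$ poset, and check that the $(\bm{2}+\bm{2})$- and $(\bm{3}+\bm{1})$-free conditions hold automatically because at most two ``layers'' of the chain interact.

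The main obstacle will be pinning down the correct correspondence between the \emph{blocks} of the ordered partition and the \emph{levels plus nesting structure} of the semiorder, since the semiorder has only two levels but the ordered partition can have arbitrarily many blocks. The resolution is that the blocks correspond not to physical levels but to the distinct equivalence classes ordered by their comparability profile; I would need to argue carefully that each equivalence class is determined by how many elements lie strictly below it and that these counts are strictly decreasing along the order, matching the block sequence. I expect the forward map (semiorder to partition) to be straightforward once equivalence classes are identified, while verifying that the reverse construction always yields a genuine semiorder of length at most one, and that the two maps are mutually inverse, will require the bulk of the routine checking.
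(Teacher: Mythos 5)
Your overall plan---identify a labeled semiorder of length at most one with the linearly ordered sequence of its equivalence classes (Definition \ref{5.5}), so that the classes become the blocks of an ordered partition---is the same idea as the paper's Proposition \ref{b3}. But two of your concrete steps fail, and the reverse map as you state it is simply wrong. Declaring $B_i > B_j$ for all $j > i+1$ does not produce a poset of length at most one: for $k \geq 5$ blocks it contains the chain $B_1 > B_3 > B_5$, which has length two, and for $k = 2$ it produces a two-block antichain, in which all $n$ elements form a \emph{single} equivalence class, so the ordered partition cannot be recovered and the map is not injective. No rule with a fixed offset can work; the threshold must depend on $k$. The structural fact your proposal is missing---and on which the paper's proof rests---is that there is a \emph{unique} seed (contracted semiorder, in the sense of Lemma \ref{Yan}) of length at most one with $k$ elements, namely the one with vector $(\lfloor k/2 \rfloor, \lfloor k/2 \rfloor - 1, \dots, 1, 0, \dots, 0)$ followed by $\lceil k/2 \rceil$ zeros; in it, class $i$ lies above class $j$ precisely when $j - i \geq \lceil k/2 \rceil$. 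The paper's bijection simply expands this unique seed with class sizes $a_1, \dots, a_k$ and labels $A_1, \dots, A_k$. Your offset-two rule happens to coincide with the correct rule for $k \in \{1,3,4\}$, which is probably why it looked plausible.

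The forward map has a matching flaw. You propose to order the classes by the number of elements strictly below each class and assert these counts are strictly decreasing along the order, but every class on the lower level has \emph{zero} elements below it, so down-set sizes cannot distinguish the lower classes: in the paper's example with blocks $\{1,4\},\{2,6,8\},\{7\},\{3,5\}$ the below-counts are $3,2,0,0$. The correct ordering is the one given by the canonical vector $\rho$: list the classes with nonempty down-sets by decreasing down-set size, then the remaining classes by increasing up-set size (both families of sets are nested, by $(\bm{2}+\bm{2})$-freeness, so both orders are well defined). Once you replace your below-count invariant by this interleaved order, and replace your offset-two rule by the unique-seed description above, your argument becomes essentially the paper's proof of Proposition \ref{b3}; as written, however, both directions of your bijection break down.
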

We give a simple bijective proof to Theorem \ref{6.3}. 
\vspace{6pt}

\begin{proposition}\label{b3}
For an ordered partition $(A_1, \dots, A_k)$ of $[n]$, let
$|A_i|=a_i$,  $1\leq i \leq k$, so $n=a_1+a_2+\dots +a_k$.  Define the
semiorder $R$ by $\rho(R)=(m, m-1, \dots, 1, 0, \dots, 0)$, where
$m=\lfloor\frac{k}{2}\rfloor$, and there are $\lceil\frac{k}{2}\rceil$
$0$'s.  Then $R$ has $k$ elements. Say the elements are $t_1, t_2,
\dots, t_k$, with $t_i$ corresponding to the $i^{\mathrm{th}}$ entry of $R$'s
integer vector.  Let $R'$ be another semiorder such that its
contraction $c(R')$ (defined in Lemma \ref{Yan}) is $R$, and in $R'$,
the sizes of the equivalence classes are $a_1, a_2, \dots, a_k$,
respectively, with $a_i$ corresponding to equivalence class
$t_i$. Label the elements in the $i^{\mathrm{th}}$ equivalence class with the
corresponding numbers in $A_i$, $1\leq i \leq k$. 

We claim that the above defines a bijective map from ordered
partitions of $[n]$ to $n$-element labeled semiorders $R'$ of length
at most one. 

\end{proposition}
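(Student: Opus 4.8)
The plan is to verify that the forward map is well defined, to construct its inverse by contracting a given labeled semiorder, and to check that the two maps are mutually inverse; the crux will be a classification of the \emph{seeds} (in the sense of Lemma~\ref{Yan}) among semiorders of length at most one.

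First I would check that the staircase $R$ with $\rho(R)=(m,m-1,\dots,1,0,\dots,0)$ is a legitimate $k$-element semiorder of length at most one: the entries are weakly decreasing and satisfy $r_i\le k-i$, and exactly $m=\lfloor k/2\rfloor$ of them are positive, so $R$ has $m$ elements on the upper level and $\lceil k/2\rceil$ on the lower level. I would then observe that $R$ is in fact a seed. Its upper elements have the strictly decreasing down-degrees $m,m-1,\dots,1$, so no two uppers lie above the same set of lowers and no two are equivalent in the sense of Definition~\ref{5.5}; a short computation shows the lower elements have pairwise distinct up-degrees as well (they run through $1,2,\dots,m$ when $k$ is even and through $0,1,\dots,q-1$ when $k$ is odd), so no two lowers are equivalent either. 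Since the class of length-at-most-one semiorders is closed under expansion (Lemma~\ref{Yan}), expanding each $t_i$ into an equivalence class of size $a_i$ yields a genuine $n$-element semiorder of length at most one, and labeling the $i$-th class with $A_i$ makes $R'$ a labeled semiorder on $[n]$. This settles well-definedness.

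The main obstacle is the key structural lemma: \emph{every seed of length at most one with $k$ elements is isomorphic to $R$, and its elements carry a canonical linear order}. I would prove this by analyzing an arbitrary length-at-most-one seed with vector $(r_1,\dots,r_m,0,\dots,0)$ and $q$ lower elements. Distinctness of the uppers forces $r_1>r_2>\dots>r_m\ge 1$; distinctness of the lowers forces, for each threshold, that consecutive lowers acquire a new upper above them, which translates into $\{1,2,\dots,q-1\}\subseteq\{r_1,\dots,r_m\}\subseteq\{1,\dots,q\}$. The only two possibilities, $\{r_i\}=\{1,\dots,q\}$ (giving $k=2q$ even) and $\{r_i\}=\{1,\dots,q-1\}$ (giving $k=2q-1$ odd), both reproduce exactly the staircase $R$. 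Because every element of a seed is determined by its pair (down-degree, up-degree) and these pairs are pairwise distinct, ordering the elements by decreasing down-degree and then increasing up-degree — which coincides with the order $t_1,\dots,t_k$ read off from $\rho(R)$ via Definition~\ref{rightmost} — gives an intrinsic linear order on the $k$ equivalence classes that does not refer back to the original partition.

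With the key lemma in hand the inverse map is immediate: given a labeled $n$-element semiorder $R'$ of length at most one, form the contraction $c(R')$, which by the lemma is the staircase $R$ on $k$ classes with its canonical order $t_1,\dots,t_k$; let $A_i$ be the set of labels occurring in the class $t_i$ and $a_i=|A_i|$, and output the ordered partition $(A_1,\dots,A_k)$. Finally I would check that the two maps are mutually inverse. Starting from an ordered partition, expanding and then contracting returns $R$ with the same class sizes and labels, and the canonical order recovers the parts in their original order; starting from $R'$, contracting and re-expanding with the recorded sizes reproduces $R'$ as a labeled poset, since a length-at-most-one semiorder is determined by its seed together with the sizes and labels of the equivalence classes. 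I expect the degree computations for the lowers and the verification that the canonical order matches the index order of $\rho(R)$ to be the only delicate points; everything else is bookkeeping supported by the closure and seed-rigidity already recorded in Lemma~\ref{Yan}.
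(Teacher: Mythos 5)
Your proposal is correct and follows essentially the same route as the paper: the forward map is the expansion of the staircase seed with classes labeled by the $A_i$, and the inverse is contraction followed by ordering the equivalence classes according to the staircase vector $\rho(c(R'))$. The only difference is one of completeness: you explicitly prove the key structural fact that every $k$-element seed of length at most one must be the staircase $(m, m-1, \dots, 1, 0, \dots, 0)$ (via the distinct down-degree/up-degree analysis), whereas the paper merely asserts that $c(R')$ has this form, so your write-up fills in precisely the step the paper glosses over.
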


For example, if we have an ordered partition
$\{1,4\}\{2,6,8\}\{7\}\{3,5\}$, then $k=4$, $m=2$, and $(a_1, a_2,
a_3, a_4)=(2, 3, 1, 2)$.  We have $\rho(R)=(2, 1, 0, 0)$, and then the
map works as follows: 

$$ \xy 0;/r.80pc/: 
(-15,0)*{\bullet}="a0";
(-15,-5)*{\bullet}="b0";
(-10,0)*{\bullet}="c0";
(-10,-5)*{\bullet}="d0";
(-12,-7)*{R};
(-5,-3)*{\longrightarrow};
"a0";"b0"**\dir{-};
"a0";"d0"**\dir{-};
"c0";"d0"**\dir{-};
(0,0)*{\bullet}="$a1$";
(1,0)*{\bullet}="$a2$";
(0,-5)*{\bullet}="b";
(5,0)*{\bullet}="$c1$";
(6,0)*{\bullet}="$c2$";
(7,0)*{\bullet}="$c3$";
(5,-5)*{\bullet}="$d1$";
(6,-5)*{\bullet}="$d2$";
(3,-7)*{\text{unlabeled } R'};
(10,-3)*{\longrightarrow};
"$a1$";"b"**\dir{-};
"$a2$";"b"**\dir{-};
"$c1$";"$d1$"**\dir{-};
"$c2$";"$d1$"**\dir{-};
"$c3$";"$d1$"**\dir{-};
"$c1$";"$d2$"**\dir{-};
"$c2$";"$d2$"**\dir{-};
"$c3$";"$d2$"**\dir{-};
"$a1$";"$d1$"**\dir{-};
"$a2$";"$d1$"**\dir{-};
"$a1$";"$d2$"**\dir{-};
"$a2$";"$d2$"**\dir{-};
(15,0)*{\bullet}="$a12$";
(16,0)*{\bullet}="$a22$";
(15,-5)*{\bullet}="b2";
(20,0)*{\bullet}="$c12$";
(21,0)*{\bullet}="$c22$";
(22,0)*{\bullet}="$c32$";
(20,-5)*{\bullet}="$d12$";
(21,-5)*{\bullet}="$d22$";
(18,-7)*{\text{labeled } R'};
(15,1)*{1};
(16,1)*{4};
(15,-6)*{7};
(20,1)*{2};
(21,1)*{6};
(22,1)*{8};
(20,-6)*{3};
(21,-6)*{5};
"$a12$";"b2"**\dir{-};
"$a22$";"b2"**\dir{-};
"$c12$";"$d12$"**\dir{-};
"$c22$";"$d12$"**\dir{-};
"$c32$";"$d12$"**\dir{-};
"$c12$";"$d22$"**\dir{-};
"$c22$";"$d22$"**\dir{-};
"$c32$";"$d22$"**\dir{-};
"$a12$";"$d12$"**\dir{-};
"$a22$";"$d12$"**\dir{-};
"$a12$";"$d22$"**\dir{-};
"$a22$";"$d22$"**\dir{-};
\endxy$$

\begin{proof}
We first show that the map takes an ordered partition of $[n]$ to a labeled $n$-element semiorder of length at most one.
The size of every equivalence class of $R$ is one, and thus semiorder
$R$ is a valid contraction.  The length of $R$ is at most one, and
thus $R'$ also has length at most one.  In addition, $R$ has $k$
elements, so $R'$ has $k$ equivalence classes.  Therefore, we can
construct the equivalence classes of $R$ to have sizes $a_1, a_2,
\dots, a_k$.  Moreover, within an equivalence class with $a_i$
elements, $1\leq i \leq k$, since we only consider nonisomorphic
semiorders, it does not matter which of the $a_i$ numbers in $A_i$ is
assigned to which element in this equivalence class.  Hence the way to
label elements is unique up to isomorphism.  Thus each ordered
partition of $[n]$ uniquely corresponds to a labeled $n$-element
semiorder of length at most one. 

Next we show that the inverse map is well-defined and uniquely
determines an ordered partition of $[n]$.  Let the labeled $n$-element
semiorder $R'$ of length at most one have $k$ equivalence classes.
Let us group up the labels within every equivalence class, so 
$$[n] = \bigcup \{\text{labels in each equivalence class}\}. $$  

To obtain an ordered partition of $[n]$, it suffices to find the way
to order the $k$ equivalence classes of $R'$. The contraction $c(R')$
must have length at most one with $\rho(c(R'))=(\lfloor \frac{k}{2}
\rfloor, \lfloor \frac{k}{2} \rfloor-1, \dots, 1, 0, \dots, 0)$, where
there are $\lceil \frac{k}{2} \rceil$ $0$'s.   

Order the $k$ elements of $c(R')$ such that the $i^{\mathrm{th}}$ element
corresponds to the $i^{\mathrm{th}}$ entry of $\rho(c(R'))$. Afterwards, we can
order the $k$ equivalence classes of $R'$ correspondingly.  Thus we
get a unique ordered partition of $[n]$, so the inverse map is
well-defined. 
\end{proof}

\vspace{6pt}

\subsection{The number of nonisomorphic unlabeled $n$-element
  semiorders of length at most three} 
\begin{theorem}\label{openquestion}
For $n\geq 1$, we have
\begin{equation}\label{6.2}
f_{\leq 3}^n=\frac{3^{n-1}+1}{2}.
\end{equation}
\end{theorem}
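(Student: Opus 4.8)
The plan is to read the answer directly off the ordinary generating function $F_{\leq 3}(x)$ supplied by equation~(\ref{3.7}), rather than counting semiorders by hand. First I would compute the polynomials $p_h(x)$ from the recurrence $p_0=1$, $p_1=1-x$, $p_{h+1}=p_h-x\,p_{h-1}$, obtaining
\begin{equation*}
p_2(x)=1-2x,\qquad p_3(x)=1-3x+x^2,\qquad p_4(x)=1-4x+3x^2 .
\end{equation*}
By~(\ref{3.7}) this gives
\begin{equation*}
F_{\leq 3}(x)=\frac{p_3(x)}{p_4(x)}=\frac{1-3x+x^2}{1-4x+3x^2}.
\end{equation*}

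The key observation is that the denominator factors completely over $\qq$, namely $1-4x+3x^2=(1-x)(1-3x)$, so $F_{\leq 3}$ admits a partial-fraction decomposition into simple geometric pieces. Since numerator and denominator have equal degree, I would first split off the constant part: comparing leading coefficients identifies the polynomial part as $\tfrac13$, and $F_{\leq 3}(x)-\tfrac13=\frac{2-5x}{3(1-x)(1-3x)}$. Writing this last fraction as $\frac{B}{1-x}+\frac{C}{1-3x}$ and clearing denominators gives $2-5x=3B(1-3x)+3C(1-x)$; evaluating at $x=1$ and $x=\tfrac13$ yields $B=\tfrac12$ and $C=\tfrac16$, so that
\begin{equation*}
F_{\leq 3}(x)=\frac13+\frac{1/2}{\,1-x\,}+\frac{1/6}{\,1-3x\,}.
\end{equation*}

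Expanding each summand as a geometric series and extracting the coefficient of $x^n$ then finishes the proof: for $n\geq 1$ the constant $\tfrac13$ contributes nothing, $\tfrac{1/2}{1-x}$ contributes $\tfrac12$, and $\tfrac{1/6}{1-3x}$ contributes $\tfrac16\cdot 3^n$, whence
\begin{equation*}
f_{\leq 3}^n=\frac12+\frac{3^n}{6}=\frac{3^{n-1}+1}{2},\qquad n\geq 1,
\end{equation*}
as claimed. The restriction $n\geq 1$ is genuinely needed, and the expansion explains why: the same series gives the correct value $f_{\leq 3}^0=\tfrac13+\tfrac12+\tfrac16=1$, whereas the displayed formula evaluates to $\tfrac23$ at $n=0$.

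There is no deep obstacle on this route; the only step demanding care is the partial-fraction bookkeeping, and in particular remembering to peel off the degree-zero part before decomposing, since $\deg p_3=\deg p_4$. An alternative argument avoids generating functions entirely: specializing Recurrence Formula~2 (Theorem~\ref{5.1}) at $h=3$ produces the second-order linear recurrence $f_{\leq 3}^n=4f_{\leq 3}^{n-1}-3f_{\leq 3}^{n-2}$, whose characteristic polynomial $x^2-4x+3=(x-1)(x-3)$ gives the general solution $A\cdot 3^n+B$; fixing $A=\tfrac16$ and $B=\tfrac12$ from the directly-computed initial values $f_{\leq 3}^1=1$ and $f_{\leq 3}^2=2$ again reproduces $\tfrac{3^{n-1}+1}{2}$. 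The one subtlety here is that the homogeneous recurrence should be applied only for $n\geq 3$, with $n=1,2$ checked by hand, so I would lead with the generating-function computation as the cleaner and fully self-contained proof.
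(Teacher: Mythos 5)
Your proof is correct, and the algebra checks out at every step: $p_3(x)=1-3x+x^2$, $p_4(x)=1-4x+3x^2=(1-x)(1-3x)$, the decomposition $F_{\leq 3}(x)=\tfrac13+\tfrac{1/2}{1-x}+\tfrac{1/6}{1-3x}$, and hence $f_{\leq 3}^n=\tfrac12+\tfrac16\cdot 3^n=\tfrac{3^{n-1}+1}{2}$ for $n\geq 1$. It is worth knowing that the paper itself never writes out a proof of this theorem: it only remarks that the formula ``can be directly derived'' either from the explicit trigonometric formula (\ref{general}) or from the recurrence of Theorem~\ref{5.1}, and it explicitly leaves a bijective proof as an open question. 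Your primary argument is thus a third route, not quite either of the two the paper gestures at, though it is close in spirit to the first: equation (\ref{general}) is itself a full partial-fraction expansion of the same rational function $p_h/p_{h+1}$ over the trigonometrically parametrized roots of $p_{h+1}$, and specializing it at $h=3$ (where $\sin$ and $\cos$ are evaluated at $\pi/6$ and $\pi/3$) gives $\tfrac16(3^n+3)$, the same answer. What your route buys is self-containedness and elementary algebra over $\qq$, with no appeal to the heavier De Bruijn--Knuth--Rice formula. Your backup route coincides with the paper's second suggestion, and you were right to flag the subtlety there: at $h=3$ the recurrence $f_{\leq 3}^n=4f_{\leq 3}^{n-1}-3f_{\leq 3}^{n-2}$ actually fails at $n=2$ (it would give $4-3=1$, whereas $f_{\leq 3}^2=2$), even though Theorem~\ref{5.1} is stated for $n\geq 2$; anchoring at $n=1,2$ and applying the recurrence only for $n\geq 3$ is exactly the right repair, and your observation that the closed form gives $\tfrac23$ rather than $1$ at $n=0$ correctly explains the hypothesis $n\geq 1$.
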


\begin{corollary}
For $n\geq 2$, $f_{3}^n=3f_{3}^{n-1}+f_{\leq 2}^{n-2}-1 =
3f_{3}^{n-1}+f_{2}^{n-2}+f_{1}^{n-2}.$ 
\end{corollary}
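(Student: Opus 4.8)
The plan is to derive both equalities algebraically from the defining identity $f_3^n = f_{\leq 3}^n - f_{\leq 2}^n$, combined with the explicit formula of Theorem~\ref{openquestion} and Recurrence formula~2 (Theorem~\ref{5.1}) specialized to the length at most $2$.

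For the first equality I would form the combination $f_3^n - 3f_3^{n-1}$ and split it as $(f_{\leq 3}^n - 3f_{\leq 3}^{n-1}) - (f_{\leq 2}^n - 3f_{\leq 2}^{n-1})$. The first parenthesis I evaluate directly from Theorem~\ref{openquestion}: substituting $f_{\leq 3}^m = (3^{m-1}+1)/2$, the powers of $3$ cancel and only the constant contribution $\tfrac{1}{2} - \tfrac{3}{2}$ survives, so $f_{\leq 3}^n - 3f_{\leq 3}^{n-1} = -1$. For the second parenthesis I invoke Theorem~\ref{5.1} at $h=2$, where $\lfloor (h+2)/2\rfloor = 2$ and the binomials $\binom{3}{1}$ and $\binom{2}{2}$ reduce the recurrence to $f_{\leq 2}^n = 3f_{\leq 2}^{n-1} - f_{\leq 2}^{n-2}$, so that $f_{\leq 2}^n - 3f_{\leq 2}^{n-1} = -f_{\leq 2}^{n-2}$. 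Combining the two evaluations gives $f_3^n - 3f_3^{n-1} = -1 + f_{\leq 2}^{n-2}$, which is exactly the first identity.

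For the second equality I only need the level decomposition $f_{\leq 2}^m = f_0^m + f_1^m + f_2^m$, valid because a semiorder of length at most $2$ has length $0$, $1$, or $2$, together with $f_0^m = 1$ for $m\geq 1$, since the unique length-$0$ semiorder on $m$ elements is the antichain. Setting $m = n-2$ yields $f_{\leq 2}^{n-2} - 1 = f_1^{n-2} + f_2^{n-2}$, which rewrites the first identity as the second.

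I do not anticipate a serious obstacle; the argument is a short linear-recurrence manipulation. The one point that requires care is the boundary index $n=2$, where the formula invokes $f_{\leq 2}^{0}$ and the decomposition degenerates to $f_{\leq 2}^0 = f_0^0$. Here I would appeal to the paper's convention $f_{\leq h}^0 = 1$ (so $f_0^0 = 1$) and verify by hand that $f_3^2 = 3f_3^1 + f_{\leq 2}^0 - 1 = 0$, consistent with the fact that no $2$-element semiorder has length $3$. I would also confirm throughout that Theorem~\ref{5.1} is applied only within its stated range $n\geq 2$, $h\geq 1$, which holds since $h=2$ and $n\geq 2$.
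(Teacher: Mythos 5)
Your proposal is correct and follows essentially the same route as the paper: both use Theorem~\ref{5.1} at $h=2$ to get $f_{\leq 2}^n = 3f_{\leq 2}^{n-1} - f_{\leq 2}^{n-2}$, Theorem~\ref{openquestion} to get $f_{\leq 3}^n = 3f_{\leq 3}^{n-1} - 1$, and the decomposition $f_{\leq 2}^{n-2} - 1 = f_1^{n-2} + f_2^{n-2}$ to pass between the two stated forms. Your extra verification of the boundary case $n=2$ is a small refinement the paper omits, but the argument is otherwise identical.
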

\begin{proof}
By Theorem \ref{5.1}, $f_{\leq 2}^n=3f_{\leq 2}^{n-1}-f_{\leq
  2}^{n-2}.$  By equation (\ref{6.2}), $f_{\leq 3}^n=3f_{\leq
  3}^{n-1}-1.$ 
Therefore,
\begin{align*}
f_{3}^n=f_{\leq 3}^n-f_{\leq 2}^n&=3f_{\leq 3}^{n-1}-1-(3f_{\leq
  2}^{n-1}-f_{\leq 2}^{n-2})\\ 
&=3(f_{\leq 3}^{n-1}-f_{\leq 2}^{n-1})+f_{\leq 2}^{n-2}-1\\
&=3f_{3}^{n-1}+f_{2}^{n-2}+f_{1}^{n-2}.
\end{align*}
\end{proof}

\begin{rmk}
Theorem \ref{openquestion} can be directly derived from equation
(\ref{general}), or from the recurrence formula in  Theorem
\ref{5.1}.  However, we believe that there should be a more
straightforward bijective proof going on, which leaves an open
question for this paper. 
\end{rmk}

\vspace{12pt}

\end{document}